\providecommand{\U}[1]{\protect\rule{.1in}{.1in}}
\newcommand{\BA}{{\mathbb {A}}}
\newcommand{\BG}{{\mathbb {G}}}
\newcommand{\BQ}{{\mathbb {Q}}}
\newcommand{\BZ}{{\mathbb {Z}}}
\newcommand{\RA}{{\mathbf {A}}}
\newcommand{\Br}{{\mathrm{Br}}}
\newcommand{\coker}{{\mathrm{coker}}}
\newcommand{\Div}{{\mathrm{Div}}}
\newcommand{\Gal}{{\mathrm{Gal}}}
\newcommand{\Hom}{{\mathrm{Hom}}}
\newcommand{\Ind}{{\mathrm{Ind}}}
\newcommand{\inv}{{\mathrm{inv}}}
\newcommand{\ord}{{\mathrm{ord}}}
\newcommand{\Pic}{\mathrm{Pic}}
\renewcommand{\mod}{\ \mathrm{mod}\ }
\newcommand{\Res}{{\mathrm{Res}}}
\newcommand{\Spec}{{\mathrm{Spec}}}
\newcommand{\St}{{\mathrm{St}}}
\font\cyr=wncyr10
\newcommand{\Sha}{\hbox{\cyr X}}
\newcommand{\sbt}{\subset}
\numberwithin{equation}{section}
\theoremstyle{remark}
\newtheorem{defi}{\rm{\textbf{Definition}}}[section]
\newtheorem{exam}[defi]{\rm{\textbf{Example}}}
\newtheorem{rem}[defi]{\rm{\textbf{Remark}}}
\theoremstyle{plain}
\newtheorem{thm}[defi]{\rm{\textbf{Theorem}}}
\newtheorem{cor}[defi]{\rm{\textbf{\textbf{Corollary}}}}
\newtheorem{lem}[defi]{\rm{\textbf{Lemma}}}
\newtheorem{prop}[defi]{\rm{\textbf{\textbf{Proposition}}}}
\newtheorem*{hypo}{\rm{\textbf{Hypothesis}}}
\begin{document}

\title[]
{Strong approximation for a family of norm varieties}

\author{Yang Cao}

\address{Yang Cao: Department of Mathematics and Physics, Leibniz University Hannover, Germany}

\email{yangcao1988@gmail.com}

\author{Dasheng Wei}

\address{Dasheng Wei: Academy of Mathematics and System Science \\ Chinese Academy of Sciences \\ Beijing 100190,  China}

\email{dshwei@amss.ac.cn}

\author{Fei Xu}

\address{Fei Xu: School of Mathematical Sciences, Capital Normal University, Beijing 100048, China}

\email{xufei@math.ac.cn}

\thanks{\textit{MSC 2010} :  11G35  14G05  14G25}


\begin{abstract}   We study strong approximation of the equation $$N_{L/k}(x) = \prod_{i=1}^n p_i(t)$$ where $L/k$ is a finite extension of number fields and $p_i(t)$'s are non-proportional irreducible polynomials over $k$. We prove this equation  satisfies strong approximation with Brauer-Manin obstruction when 
$L$ can be imbedded in $k[t]/(p_i(t))$ over $k$ for all $1\leq i\leq n$. Under Schinzel's hypothesis, we prove that the same result is true without assuming that $L$ can be imbedded in $k[t]/(p_i(t))$ for all $1\leq i\leq n$ when $L/k$  is cyclic.
 \end{abstract}

\maketitle


\section{Introduction}

 Weak approximation of the following family of norm varieties
\begin{equation} \label{n-eq} N_{L/k}(x) =q(t)   \end{equation}  has been studied extensively, where $L/k$ is a finite extension of number fields and $q(t)$ is a polynomial over $k$. 
So far, there are two approaches to study this problem. The first approach is to study weak approximation of certain torsors over equation (\ref{n-eq}), which depends heavily on the explicit description of the torsors and the descent theory developed in \cite{CTS87}, by

$\bullet$ geometric methods ($k$-rationality, $[L:k]\leq 3$ and $\deg(q(t))\leq 6$; see \cite{CTSSSD}, \cite{SD99}, \cite{CTSa});  

$\bullet$  the circle method (all irreducible factors of $q(t)$ are at most two distinct linear polynomials; see \cite{HBS02}, \cite{CTHS03}, \cite{Jo13}, \cite{SS14});

$\bullet$ the sieve method ($k=\Bbb Q$ and $q(t)$ is irreducible with $\deg (q(t))\leq 3$; see \cite{BHB}, \cite{DSW}, \cite{Irv});

$\bullet$ additive combinatorics ($k=\Bbb Q$ and $q(t)$ is a product of linear polynomials; see  \cite{BM13}, \cite{BMS}). 

Another approach is the fibration method which can be traced back to Hasse's proof about the local-global principle of quadratic forms. Along the same spirit of Hasse's proof, 
Colliot-Th\'el\`ene and Sansuc in \cite{CTSS} proved the equation (\ref{n-eq}) satisfies weak approximation by assuming Schinzel's hypothesis when $k=\Bbb Q$, $[L:k]=2$ and $q(t)$ is an irreducible polynomial over $\Bbb Q$. Such a conditional result (under Schinzel's hypothesis) was largely extended in \cite{CTSKSD} and \cite{Wei}. In \cite{HSW}, Harpaz, Skorobogatov and Wittenberg explained how to replace Schinzel's hypothesis with the recent achievement in additive combinatorics developed by Green, Tao and Ziegler to obtain the unconditional results.  In \cite{HW}, Harpaz and Wittenberg further improved the fibration method such that most of the above mentioned unconditional results can be covered. Moreover, they also proposed a conjecture in \cite{HW} which implies that (\ref{n-eq}) satisfies weak approximation with Brauer-Manin obstruction in general. 
 
For strong approximation of equation (\ref{n-eq}), the first non-trivial example was given by Derenthal and Wei in \cite{DW} where $[L;k]=4$ and $q(t)$ is an irreducible quadratic polynomial which has a root in $K$.  By using the explicit description of the universal torsor of (\ref{n-eq}) in this special case,  they proved that this universal torsor satisfies strong approximation in \cite{DW}. Under Schinzel's hypothesis, the related results for integral points and strong approximation of Ch\^{a}telete surfaces are studied in \cite{Gun} and \cite{MIT}. 

In \cite{CTH}, Colliot-Th\'el\`ene and Harari developed the fibration method for studying strong approximation. However, their method can not applied to study the equation (\ref{n-eq}). As pointed out in \cite[P.175]{CTH}, there are two difficulties for proving strong approximation of the equation (\ref{n-eq}) by fibration method. 

D1: non-trivial elements in Brauer groups of special fibres over rational points are infinite. 

D2:  the fibers over $q(t)$ may not split.

In this paper, we will overcome the first difficulty (D1) by using the natural action of torus $\Res_{L/k}^1(\Bbb G_m)$. For any given open set of adelic points of (\ref{n-eq}), one can find a finite subgroup of Brauer group of the generic fiber via this action such that the existence of rational points over almost all fibers of rational points can be tested by using the restriction of this finite group to these special fibers (see Proposition \ref{assum}). 
It is well known that one can overcome the second difficulty (D2) by using Schinzel's hypothesis. However, one needs to get rid of obstruction from finite extensions from residue fields of fibers over $p(t)$ to their algebraic closure in their function fields. 
This new ingredient forces us to restrict the field extension $L/k$, for example to be cyclic. Over $\Bbb P^1$, Harpaz and Wittenberg in \cite{HW} proposed a conjecture to replace Schinzel hypothesis to solve the similar difficulty as (D2). One can also raise an integral version of Harpaz-Wittenberg conjecture to establish strong approximation of equation (\ref{n-eq}) for any finite field extension $L/k$.

Notation and terminology are standard. Let $k$ be a number field, $\Omega_k$ the set of all primes in $k$ and  $\infty_k$ the set of all Archimedean primes in $k$. Write $v<\infty_k$ for $v\in \Omega_k\setminus \infty_k$. Let $\frak o_k$ be the ring of integers of $k$ and $\frak o_{k,S}$ the $S$-integers of $k$ for a finite set $S$ of $\Omega_k$ containing $\infty_k$. For each $v\in \Omega_k$, the completion of $k$ at $v$ is denoted by $k_v$, the completion of $\frak o_k$ at $v$ by $\frak o_{k_v}$ and the residue field at $v$ by $k(v)$ for $v<\infty_k$. Write $\frak o_{k_v}=k_v$ for $v\in \infty_k$ and $k_{\infty}=\prod_{v\in \infty_k} k_v$. Let ${\mathbf A}_k$ be the adele ring of $k$ and ${\mathbf A}_k^f$ the finite adele ring of $k$. For a finite extension $L/k$ of number fields and a finite set $S$ of $\Omega_k$ containing $\infty_k$, the set of elements in $L$ which are integral over all primes not lying above $S$ is denoted by $\frak o_{L, S}$.

A variety $X$ over $k$ is defined to be a reduced separated scheme of finite type over $k$. We denote $X_{\bar k}=X\times_k \bar{k}$ with $\bar{k}$ a fixed algebraic closure of $k$.  Let $$\Br(X)=H_{\text{\'et}}^2(X, \Bbb G_m) , \ \  \ \ \ \Br_1(X)= \ker[\Br(X)\rightarrow \Br(X_{\bar{k}})] $$ \
and
 $$ \Br_a(X)={\rm {coker}}[\Br (k) \rightarrow \Br_1(X)]  , \ \ \ \ \ \  \overline{\Br(X)} ={\rm {coker}}[\Br (k) \rightarrow \Br(X)]  .$$


For any subset $B$ of $\Br(X)$, we set 
$$ X({\mathbf A}_k)^B = \{ (x_v)_{v\in \Omega_k}\in X({\mathbf A}_k): \ \ \sum_{v\in \Omega_k} \inv_v(\xi(x_v))=0, \ \ \forall \xi\in B \}  . $$ 
We denote by $\pi_0(X(k_{\infty}))$ and $\pi_0(X(k_v))$ the set of connected components of $X(k_{\infty})$ and  $X(k_v)$ for $v\in \infty_k$ with discrete topology respectively.
 Define
$ X(\mathbf A_k)_{\bullet}=  \pi_0(X(k_\infty))\times X(\mathbf A_k^f) $
and 
$$X(\RA_k)_E:=E\times X(\RA_k^f)\sbt X(\RA_k)_{\bullet}\ \ \ \text{and}\ \ \  X(k)_E:=X(k)\cap X(\RA_k)_E\sbt X(\RA_k)_{\bullet}$$
for any subset $E \sbt \pi_0(X(k_{\infty}))$.  Since an element in $\Br(X)$ takes the constant value at each connected component of $X(k_\infty)$, the set $X(\RA_k)_E^B$ is well-defined. Write $pr_{\bullet}: X(\RA_k)\to X(\RA_k)_{\bullet}$ the natural projection map. If $G$ is a connected algebraic group over $k$, then $G(k_\infty)^+$ is denoted as the connected component of identity of Lie group $G(k_\infty)$.  

Let $\phi: X\rightarrow Y$ be a morphism of schemes. For $y\in Y$, we write $X_y$ for the fiber of $\phi$ over $y$.

\begin{defi} \label{sa}  Let $X$ be a variety over a number field $k$, $B$ a subset of $\Br(X)$ and $S_0$ a finite subset of $\Omega_k$. 
We say $X$ satisfies strong approximation with respect to $B$ off $S_0$ if $X(k)$ is dense in $ pr^{S_0}(X({\mathbf A}_k)^B)\neq \emptyset$ where $pr^{S_0}: X(\RA_k) \rightarrow X(\RA_k^{S_0})$ the natural projection map and $\RA_k^{S_0}$ is the adele ring of $k$ without $S_0$-components. 
\end{defi}

The first main result of this paper is the following theorem (see Corollary \ref{app-neq}).

\begin{thm}\label{intr-m1}
Let $X$ be the smooth locus of the following affine variety  
$$  \prod_{i=1}^m N_{L_i/k}(x_i) = c\prod_{j=1}^n p_j(t)^{e_j} \ \ \ \text{with} \ \ \ c\in k^\times $$ 
where $L_i/k$'s are finite extensions of number fields and $p_j(t)$'s are distinct irreducible monic polynomials over $k$ and $e_j$'s are positive integers. 
If for each $1\leq j \leq n$ there is $L_i$ such that  either $L_i$ can be imbedded in $k[t]/(p_j(t))$ over $k$ or $[L_i: k] \mid e_j$ with $1\leq i\leq m$, then $X$ satisfies strong approximation off $\infty_k$ with respect to $\Br_1(X)$.  
\end{thm}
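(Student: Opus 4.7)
The plan is to prove strong approximation via the fibration method applied to the projection $\pi\colon X\to\BA^1_k$ sending $(x,t)\mapsto t$, using the natural action of the norm-one torus $T=\Res_{L/k}^1(\BG_m)$ on $X$ to handle difficulty $(D1)$, while the two alternative hypotheses on each $p_i$ handle difficulty $(D2)$ without recourse to Schinzel's hypothesis. The smooth fibers of $\pi$ are principal homogeneous spaces under $T$, and since $T$ is a connected linear algebraic group it satisfies strong approximation off $\infty_k$ with respect to the Brauer--Manin obstruction, a property that descends to its torsors.

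First I would invoke Proposition \ref{assum}: the $T$-action lets one extract a \emph{finite} subgroup $B_0\subset\Br_1(X_\eta)$ such that for almost every $t_0\in\BA^1(k)$, an adelic point on $X_{t_0}$ orthogonal to the restriction of $B_0$ can be approximated by a rational point of $X_{t_0}$. This replaces the potentially infinite Brauer obstruction on the generic fiber by finitely many conditions, overcoming $(D1)$. Next I would verify the splitness of the fibers over each bad closed point $P_i=V(p_i(t))$ with residue field $K_i=k[t]/(p_i(t))$. If $L\hookrightarrow K_i$, then $L\otimes_k K_i\cong K_i\times M_i$ for some \'etale $K_i$-algebra $M_i$, and the fiber over $P_i$ contains a $K_i$-component (obtained by zeroing the $K_i$-coordinate of $x$ under this decomposition) that is geometrically integral, providing the required split component. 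If instead $[L:k]\mid e_i$, one writes $p_i(t)^{e_i}=N_{L/k}\bigl(p_i(t)^{e_i/[L:k]}\bigr)$ (with $p_i(t)^{e_i/[L:k]}$ regarded as a scalar in $L$) and performs the substitution $x\mapsto x\cdot p_i(t)^{e_i/[L:k]}$; the factor $p_i(t)^{e_i}$ is then absorbed into the norm and the corresponding non-split fiber disappears altogether.

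Finally I would run the fibration argument. Given $(P_v)\in X(\mathbf{A}_k)^{\Br_1(X)}$ and a non-trivial open neighbourhood off $\infty_k$, the goal is to choose a rational $t_0\in k$ close to the $t$-coordinates of $(P_v)$ away from $\infty_k$, such that $X_{t_0}$ is smooth and carries an adelic point close to $(P_v)$ and still orthogonal to $B_0$; strong approximation for the $T$-torsor $X_{t_0}$ then yields the desired rational point on $X$. The main obstacle is this last orthogonality step: one must control the evaluation of Brauer elements of $X_{t_0}$ at each place, and in particular exploit the split component constructed above near each bad $P_i$ to ensure that the local conditions imposed by $B_0$ are compatible with those already satisfied by $(P_v)$ for the Brauer group of the total space $X$. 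This is where the splitness hypotheses on the $p_i$ enter decisively, and the bookkeeping around them --- especially at places where $p_i$ has bad reduction or $(P_v)$ has unusual local behaviour --- is the most delicate part of the argument.
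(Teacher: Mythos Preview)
Your overall architecture matches the paper's---fibration along $t$, torus action via Proposition~\ref{assum} to reduce to a finite Brauer subgroup, then strong approximation on the fibers---but your handling of the alternative $[L:k]\mid e_i$ has a genuine gap. When $[L:k]\mid e_i$ yet $L\not\hookrightarrow K_i$, the fiber $\{N_{L/k}(x)=0\}$ over $P_i$ has \emph{no} geometrically integral component over $K_i$: every factor of $L\otimes_k K_i$ is a proper extension of $K_i$, so each irreducible piece of the null cone becomes geometrically integral only after a nontrivial base change. The fibration is simply not split there. Your proposed remedy, the substitution $x=x'\cdot p_i(t)^{e_i/[L:k]}$, defines only a morphism $Y'\to Y$ collapsing the whole fiber over $P_i$ onto the singular locus $\{x=0,\,p_i(t)=0\}$; it is neither an isomorphism of smooth loci nor surjective on integral points near $P_i$, and strong approximation does not transfer along such a map.

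The paper's fix is to use the very identity you wrote, $p_i(t)^{e_i}=N_{L/k}\bigl(p_i(t)^{e_i/[L:k]}\bigr)$, but for a different purpose: to build a \emph{section} of $f$ rather than to change the variety. Both hypotheses make each monic $p_i(t)^{e_i}$ a norm from $L[t]$---in Case~1, if $q_i(t)$ is the minimal polynomial over $L$ of a root of $p_i$, then $N_{L/k}(q_i(t))=p_i(t)$---so $\prod_i p_i(t)^{e_i}\in N_{L/k}(L[t])$. Combined with $c\in N_{L/k}(L^\times)$, which follows from $X(\mathbf A_k)^{\Br_1(X)}\neq\emptyset$ by \cite[Theorem 5.1]{S}, this yields a section $\sigma:\BA^1_k\to X$. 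The point (Remark~\ref{sec}) is that in Proposition~\ref{fib-bm} the split hypothesis is used \emph{only} to ensure that $f(\prod_v W_v)$ contains a nonempty open subset of $\mathbf A_k$, and a section provides this directly; so one runs the fibration argument without ever verifying splitness. You should also check the unboundedness of $f(E_v)$ at each real place, which the paper handles via the section and which your sketch omits.
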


Under Schinzel's hypothesis, one can remove the assumption that  for each $1\leq j\leq n$ there is $L_i$ such that  either $L_i$ can be imbedded in $k[t]/(p_j(t))$ over $k$ or $[L_i: k] \mid e_j$ with $1\leq i\leq m$ in the above theorem if one of extensions $L_i/k$ for $1\leq i\leq m$ is cyclic (see Corollary \ref{neq-sch}). 

\begin{thm} \label{intr-m2} Let $X$ be the smooth locus of the following affine variety 
$$ \prod_{i=1}^m N_{L_i/k}(x_i) = c\prod_{j=1}^n p_j(t)^{e_j} \ \ \ \text{with} \ \ \ c\in k^\times $$ 
where $L_i/k$'s are finite extensions of number fields and $p_j(t)$'s are distinct irreducible monic polynomials over $k$ and $e_j$'s are positive integers. Assume that one of extensions $L_i/k$ for $1\leq i\leq m$ is cyclic.
Suppose the projection of each connected component of $X(k_\infty)$ to $t$-coordinate is unbounded for all real primes $v\in \infty_k$. If Schinzel's hypothesis holds, then $X$ satisfies strong approximation off $\infty_k$ with respect to $\Br_1(X)$.  
\end{thm}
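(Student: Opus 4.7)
The plan is to run the fibration method along the projection $f\colon X\to\mathbb{A}^1_k$, $(x,t)\mapsto t$, reduce to a finite subgroup of the Brauer group of the generic fiber by exploiting the natural action of $T=\Res_{L/k}^1(\mathbb{G}_m)$, and then invoke Schinzel's hypothesis to pick a rational base point $t_0\in k$ at which the fiber is a principal homogeneous space under $T$ whose solvability is forced by the Brauer--Manin obstruction. Starting from $(P_v)\in X(\mathbf A_k)^{\Br_1(X)}$, the hypothesis that each connected component of $X(k_\infty)$ has unbounded $t$-projection at every real place is used to move the archimedean coordinates freely while keeping the Brauer--Manin pairings intact.

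First, by Proposition~\ref{assum} there is a finite subgroup $B_0$ of the Brauer group of the generic fiber of $f$ such that, on an adelic neighborhood of $(P_v)$, orthogonality to $B_0$ on almost all rational fibers already encodes the full $\Br_1$-obstruction; this is how difficulty (D1) is circumvented. Choose a finite set $S\subset\Omega_k$ large enough to contain $\infty_k$, all places where $B_0$ or the $p_i$ are ramified, and all places of bad reduction. Apply a version of Schinzel's hypothesis with congruence and sign conditions at $S$ to produce $t_0\in k$ that approximates $(f(P_v))_v$ on $S$ and for which each $p_i(t_0)$ factors as $u_i\pi_i$ with $u_i$ an $S$-unit and $\pi_i$ a new prime of $k$ outside $S$.

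The flexibility of Schinzel's hypothesis also allows one to prescribe, via a Chebotarev-type auxiliary condition, the Frobenius of each $\pi_i$ in the cyclic group $\Gal(L/k)$, so that its order divides $e_i$. This forces $\pi_i^{e_i}$ to be a local norm at $\pi_i$, so that the only nontrivial local norm conditions for $c\prod u_i\pi_i^{e_i}$ live at places in $S$. The Brauer--Manin orthogonality of $(P_v)$ to $B_0$, evaluated on the fiber $X_{t_0}$, precisely translates into these remaining local norm conditions at $S$; this is where the explicit control of $B_0$ coming from the $T$-action is essential.

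Since $L/k$ is cyclic, the Hasse norm theorem then promotes all the local norm conditions into the statement that $c\prod u_i\pi_i^{e_i}$ is a global norm from $L$, yielding $X_{t_0}(k)\neq\emptyset$ and exhibiting $X_{t_0}$ as a trivial torsor under $T$. Strong approximation with algebraic Brauer--Manin obstruction off $\infty_k$ for torsors under the torus $T$, a standard result, then converts a rational point of $X_{t_0}$ into one of $X$ close to $(P_v)$ in $X(\mathbf A_k^{\infty_k})$. The principal obstacle is difficulty (D2): ensuring that Brauer--Manin orthogonality to the finite group $B_0$ really does encode the full system of Hasse local norm conditions at every place, including at the new primes $\pi_i$ where the residue-field extensions warned about in (D2) might otherwise intervene. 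The cyclic hypothesis is used exactly here, both via Chebotarev inside Schinzel to kill those residue extensions and via the Hasse norm theorem to convert local norms into a global one; without cyclicity both steps would require a substitute of Harpaz--Wittenberg type.
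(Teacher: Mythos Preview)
Your overall architecture is correct and matches the paper's: fibration along $t$, reduce to a finite Brauer subgroup via Proposition~\ref{assum}, invoke Schinzel to pick $t_0$, then use torsor triviality on the fiber. The gap is the sentence ``the flexibility of Schinzel's hypothesis also allows one to prescribe, via a Chebotarev-type auxiliary condition, the Frobenius of each $\pi_i$.'' Hypothesis $(H_s)$ lets you impose congruence and sign conditions on $t_0$ at a \emph{fixed} finite set $S$; it gives no direct control over $\mathrm{Frob}(\pi_i)$ for the new primes $\pi_i\notin S$. By reciprocity $\mathrm{Frob}(\pi_i)$ is determined by the local data $p_i(t_0)\in k_v^\times$ for $v\in S$, but those values are forced on you by the approximation requirement $t_0\approx f(P_v)$, so you are not free to adjust them to hit a prescribed Frobenius class. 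Nothing in your outline explains how to reconcile these two constraints.

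The paper (Proposition~\ref{sc-bm}) avoids prescribing the Frobenius altogether. It enlarges $B_0$ by the corestriction classes $f^*\mathrm{Cores}_{k_i/k}(\chi,t-\alpha_i)$ for $\chi\in H^1(L_i/k_i,\mathbb{Q}/\mathbb{Z})$ with $L_i=L\cdot k_i$, applies Harari's formal lemma to this larger finite group $B+B_1$, and then \emph{deduces} from orthogonality to $B_1$ and reciprocity that the degree-one prime of $k_i$ above $\pi_i$ splits completely in $L_i/k_i$. There is also a second step you have elided: even once local points on $X_{t_0}$ exist at the $\pi_i$, one must check orthogonality of the assembled adelic point to $B_0$ there. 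The paper does this by a further modification at auxiliary Chebotarev primes (the sets $T_i$), which succeeds thanks to the identity $[\Gal(M_i/k_i),\Gal(M_i/k_i)]=[\Gal(L_i/k_i),\Gal(M_i/L_i)]$; this identity is equivalent to injectivity of the inflation $H^2(L_i/k_i,\mathbb{Q}/\mathbb{Z})\to H^2(M_i/k_i,\mathbb{Q}/\mathbb{Z})$ and is where cyclicity of $L_i/k_i$ is actually used---not, as you suggest, via the Hasse norm theorem (the paper concludes torsor triviality from Sansuc's theorem, valid for any $L/k$).
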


It should be pointed out that the case of Theorem \ref{intr-m2}  is even unknown for weak approximation before. The second named author further applies this idea to study weak approximation in \cite{Wei1}.

The paper is organized as follows. In \S 2, we work out the local necessary condition at $\infty_k$ of strong approximation for quasi-affine varieties. Then we prove Theorem \ref{intr-m1} and Theorem \ref{intr-m2} in \S 3 and \S4 respectively.  In \S 5, we further apply our method to show certain results in opposite direction of \cite[Corollary 9.10]{HW}.

\section{Necessary conditions at $\infty_k$}

For a number field $k$, it is well known that $k$ is discrete and closed in $\mathbf A_k$ by the product formula. When $X$ is a quasi-affine variety over $k$, then $X(k)$ is discrete and closed in $X(\Bbb A_k)$. Therefore $X(k_\infty)$ is not compact if $X$ satisfies strong approximation off $\infty_k$ in the classical sense. In this section, we extend this necessary condition to strong approximation with Brauer-Manin obstruction when $\overline{\Br(X)}$ is finite. 

\begin{defi} Let $X$ be a variety over a number field $k$ and $B$ be a subgroup of $\Br(X)$.  A connected component $D$ of $X(k_\infty)$ is called admissible with respect to $B$ if $D \subset pr_{\infty_k}(X(\mathbf A_k)^{B})$ where $pr_{\infty_k}: X(\mathbf A_k)\rightarrow X(k_\infty)$ is the projection map. 
\end{defi}

Since each element $b\in \Br(X)$ takes a constant value over a connected component $D_v$ of $X(k_v)$ for $v\in \infty_k$, one can simply write $b(D_v)$ for this constant value. 

Let $B$ be a subgroup of $\Br(X)$. We define $D\sim_B D'$ for two connected components $$D=\prod_{v\in \infty_k}D_v  \ \ \ \text{ and } \ \ \ D'=\prod_{v\in \infty_k} D_v'$$ of $X(k_\infty)$ if
$$ \sum_{v\in \infty_k} b(D_v) =\sum_{v\in \infty_k} b(D_v') $$ for all $b\in B$. This $\sim_B$ provides an equivalent relation among the admissible connected components of $X(k_\infty)$ with respect to $B$.

\begin{lem} Let $B$ be a finite subgroup of $\Br(X)$. Then two admissible connected components $D$ and $D'$ of $X(k_\infty)$ with respect to $B$ satisfy $D\sim_B D'$ if and only if  there is an open compact subset $W\neq \emptyset$ of $X(\mathbf A_k^f)$ such that
$$D\times  W \subset X(\mathbf A_k)^{B}  \ \ \ \text{and} \ \ \ D' \times  W \subset X(\mathbf A_k)^{B} . $$
\end{lem}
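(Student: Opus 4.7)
The plan is to reduce both directions to the additive decomposition
\[
\sum_{v\in\Omega_k}\inv_v(b(x_v)) \;=\; \sum_{v\in\infty_k} b(D_v)\;+\;\sum_{v<\infty_k}\inv_v(b(w_v))
\]
that an adelic point $(x_v)=((d_v)_{v\in\infty_k},(w_v)_{v<\infty_k})$ with archimedean part in the connected component $D$ satisfies, and to track this quantity as $D$ is replaced by $D'$ while keeping the finite component fixed. Since every $b\in B$ is constant on each $D_v$, the archimedean part of the sum only sees the component $D$, not the point chosen in it; this is what allows both conditions in the statement to be compared.

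For the ``if'' direction, suppose a single $W$ with the stated property exists, and pick any $w\in W$. For arbitrary $d\in D$ and $d'\in D'$, the inclusions $D\times W,\ D'\times W\subset X(\mathbf{A}_k)^B$ yield
\[
\sum_{v\in\infty_k} b(D_v) + \sum_{v<\infty_k}\inv_v(b(w_v)) = 0 = \sum_{v\in\infty_k} b(D'_v) + \sum_{v<\infty_k}\inv_v(b(w_v))
\]
for every $b\in B$, and subtracting immediately gives $D\sim_B D'$.

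For the ``only if'' direction, admissibility of $D$ provides some $(x_v)\in X(\mathbf{A}_k)^B$ with $(x_v)_{v\in\infty_k}\in D$. I will produce an open compact $W\subset X(\mathbf{A}_k^f)$ containing $(x_v)_{v<\infty_k}$ on which $(w_v)\mapsto \sum_{v<\infty_k}\inv_v(b(w_v))$ equals the corresponding value at $(x_v)_{v<\infty_k}$ for each $b\in B$. The existence of such a $W$ is a standard continuity fact: for fixed $b$, the local invariant $\inv_v(b(\,\cdot\,))$ is a locally constant function on $X(k_v)$; and after spreading $b$ over a suitable integral model of $X$, for almost all $v$ it extends to an unramified class whose local invariant vanishes on integral points, so only finitely many places contribute. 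Taking the intersection over the finitely many $b\in B$ of the resulting open compact neighborhoods gives $W$.

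With this $W$ in hand, $D\times W\subset X(\mathbf{A}_k)^B$ holds by construction, since perturbing $(x_v)_{v\in\infty_k}$ inside $D$ does not alter the archimedean contribution $\sum_v b(D_v)$. The hypothesis $D\sim_B D'$ says exactly that replacing $D$ by $D'$ in the archimedean sum leaves the total unchanged, so $D'\times W\subset X(\mathbf{A}_k)^B$ is automatic. The only real obstacle to guard against is ensuring that $W$ is genuinely compact and open in $X(\mathbf{A}_k^f)$; this is precisely where the hypothesis that $B$ be finite is essential, since it guarantees that only finitely many conditions must be intersected and that the ``bad'' set of places where the invariant can be nonzero is finite and uniform for the whole of $B$.
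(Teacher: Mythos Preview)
Your proof is correct and follows essentially the same approach as the paper: both directions rest on the decomposition of the Brauer--Manin sum into an archimedean part (constant on connected components) and a finite-adelic part, together with the local constancy of $\inv_v(b(\cdot))$ and the finiteness of $B$ to produce a suitable product neighborhood $W$. One minor difference worth noting: in the ``only if'' direction the paper invokes admissibility of \emph{both} $D$ and $D'$, obtaining two open compact sets $W$ and $V$ with $D\times W,\,D'\times V\subset X(\mathbf A_k)^B$, and then compares $\sum_{v<\infty_k} b(W_v)$ with $\sum_{v<\infty_k} b(V_v)$ to conclude $D'\times W\subset X(\mathbf A_k)^B$; your argument uses only the admissibility of $D$ and passes directly from $D\times W\subset X(\mathbf A_k)^B$ to $D'\times W\subset X(\mathbf A_k)^B$ via $D\sim_B D'$, which is slightly more economical.
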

\begin{proof} Suppose that $D$ and $D'$ are the admissible connected components of $X(k_\infty)$ with respect to $B$ satisfying $D\sim_B D'$. There are  open compact subsets $W\neq \emptyset$  and $V\neq \emptyset$ in $X(\mathbf A_k^f)$ such that 
$$ D\times W \subset X(\mathbf A_k)^{B}  \ \ \ \ \text{and} \ \ \ \ D'\times V \subset X(\mathbf A_k)^{B} $$ respectively. 
Since $B$ is finite, one can further assume that $$W=\prod_{v< \infty_k} W_v \ \ \ \text{ and } \ \ \ V=\prod_{v< \infty_k} V_v$$  such that each element in $B$ takes a single value over $W_v$ and $V_v$ respectively for each $v<\infty_k$. One can simply denote this single value by $b(W_v)$ and $b(V_v)$ respectively for each $v<\infty_k$. Moreover, one has $b(W_v)=b(V_v)=0$ for almost all $v<\infty_k$. 

Since 
$$ \sum_{v\in \infty_k} b(D_v) =\sum_{v\in \infty_k} b(D_v')  \ \ \ \text{with} \ \ \ D=\prod_{v\in \infty_k} D_v \ \ \text{and} \ \ D'=\prod_{v\in \infty_k} D_v' $$ for all $b\in B$, one concludes that 
$$ \sum_{v< \infty_k} b(W_v) =\sum_{v<\infty_k} b(V_v) $$ for all $b\in B$. This implies that $D'\times W\subset X(\mathbf A_k)^{B} $ as desired. 

Conversely, one takes an element $(x_v)_{v<\infty_k} \in W$ and obtains $D\sim_B D'$.  
\end{proof}

The following proposition gives the necessary condition of strong approximation with Brauer-Manin obstruction for quasi-affine varieties at $\infty_k$ when $\overline{\Br(X)}$ is finite. 

\begin{prop}\label{nec} Let $X$ be a quasi-affine geometrically integral variety over a number field $k$ with $\dim(X)\geq 1$. Suppose that $B$ is a finite subgroup of $\Br(X)$. If $X$ satisfies strong approximation with respect to $B$ off $\infty_k$, then each equivalent class of admissible connected components of $X(k_\infty)$ with respect to $B$ contains a non-compact one.
\end{prop}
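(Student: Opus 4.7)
The plan is to argue by contradiction. Suppose some equivalence class $\mathcal{E}$ of admissible connected components of $X(k_\infty)$ with respect to $B$ consists entirely of compact components. Fix $D_0\in\mathcal{E}$, and by the preceding lemma choose a non-empty open compact subset $W\subset X(\mathbf{A}_k^f)$ such that $D\times W\subset X(\mathbf{A}_k)^B$ for every $D\in\mathcal{E}$. Since each $X(k_v)$ for $v\in\infty_k$ has only finitely many connected components, $\mathcal{E}$ is finite and $K:=\bigcup_{D\in\mathcal{E}}D$ is a compact subset of $X(k_\infty)$.

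The key step, and the one I expect to be the main obstacle, is to show that every $y\in X(k)$ whose finite-adelic component lies in $W$ in fact has archimedean component inside $K$. By the global reciprocity law, the diagonal image $(y_v)_{v\in\Omega_k}$ lies in $X(\mathbf{A}_k)^{\Br(X)}\subset X(\mathbf{A}_k)^B$, so the connected component $D_y$ of $X(k_\infty)$ containing $(y_v)_{v\in\infty_k}$ is admissible. Using that $B$ is finite and each $\inv_v\circ b$ is locally constant on $X(k_v)$, with $b(y_v)=0$ for almost all $v$, one can shrink $W$ to an open compact neighborhood $U\subset W$ of $(y_v)_{v<\infty_k}$ on which every $b\in B$ is componentwise constant. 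Then both $D_0\times U$ and $D_y\times U$ lie in $X(\mathbf{A}_k)^B$, and the converse direction of the preceding lemma yields $D_y\sim_B D_0$, so $D_y\in\mathcal{E}$ and the archimedean component of $y$ lies in $K$. The delicate point here is assembling one common $U$ that works for every $b\in B$ simultaneously, which is where finiteness of $B$ and local constancy of invariants are used.

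To finish: since $X$ is quasi-affine, $X(k)$ is discrete and closed in $X(\mathbf{A}_k)$, so $X(k)\cap(K\times W)$ is a discrete closed subset of the compact set $K\times W$, hence finite. By the previous step, every $y\in X(k)$ whose finite-adelic image lies in $W$ belongs to this finite set. On the other hand, strong approximation with respect to $B$ off $\infty_k$ forces the image of $X(k)$ in $X(\mathbf{A}_k^f)$ to be dense in $pr^{\infty_k}(X(\mathbf{A}_k)^B)$, which contains $W$ by construction. Density combined with finiteness would make $W$ itself finite, contradicting that $W=\prod_{v<\infty_k}W_v$ is a non-empty open subset of $X(\mathbf{A}_k^f)$ while $X$ is geometrically integral of dimension at least one: any non-empty $W_v$ is an open subset of the positive-dimensional $k_v$-analytic manifold $X(k_v)$, hence uncountable.
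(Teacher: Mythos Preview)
Your argument is correct and follows the same contradiction strategy as the paper: compactness of $K=\bigcup_{D\in\mathcal{E}}D$, discreteness of $X(k)$ in $X(\mathbf A_k)$, finiteness of $X(k)\cap(K\times W)$, then a clash with strong approximation. The paper streamlines two of your steps. First, it takes $W=\prod_{v<\infty_k}W_v$ with every $b\in B$ constant on each $W_v$ from the outset; then $(X(k_\infty)\times W)\cap X(\mathbf A_k)^B=K\times W$ is immediate, so your per-$y$ shrinking to $U$ and appeal to the converse direction of the lemma become unnecessary (and the ``delicate point'' you flag disappears). Second, rather than arguing that density forces $W$ itself to be finite, the paper simply removes the finitely many rational points $\{x_1,\dots,x_l\}$ from one factor $W_{v_0}$, using Lang--Weil to ensure $W_{v_0}\setminus\{x_1,\dots,x_l\}\neq\emptyset$, and exhibits a non-empty open subset of $X(\mathbf A_k)^B$ meeting no rational point. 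One small imprecision in your final sentence: $X$ is not assumed smooth, so $X(k_v)$ need not be a $k_v$-analytic manifold; Lang--Weil (as the paper uses) is the correct way to secure that a non-empty open $W_v$ is infinite for almost all $v$.
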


\begin{proof} If $k$ has a complex prime $v$, then $X(k_v)$ is connected (see \cite[Chapter 3, Theorem 3.5]{PR}). Since $X$ is quasi-affine, the set $X(k_v)$ is not compact. This implies that every connected component of $X(k_\infty)$ is not compact. Therefore one only needs to consider the case that $k$ is totally real. 

Suppose there is an equivalent class $[D]$ of admissible connected components of $X(k_\infty)$ such that every element in this class is compact. Since $D$ is admissible with respect to $B$, there is an open compact subset $W\neq \emptyset$ of $X(\mathbf A_k^f)$ such that 
$ D\times W \subset X(\mathbf A_k)^{B} $. Since $B$ is finite, one can further assume that $W=\prod_{v<\infty_k} W_v$ such that each element in $B$ takes a constant value over $W_v$ for all $v<\infty_k$. Therefore
$$ (X(k_\infty) \times \prod_{v<\infty_k} W_v ) \cap X(\mathbf A_k)^{B} = (\bigcup_{C\in [D]} C) \times \prod_{v<\infty_k} W_v  . $$
Since the number of connected components of $X(k_\infty)$ is finite by \cite[Chapter 3, Theorem 3.6]{PR}, the set $\bigcup_{C\in [D]} C$ is compact by our assumption. Since $X$ is quasi-affine, one obtains that $X(k)$ is discrete and closed in $X(\mathbf A_k)$. This implies that the set
$$ X(k) \cap  [(\bigcup_{C\in [D]} C) \times \prod_{v<\infty_k} W_v ]= \{ x_1, \cdots, x_l \} $$ is finite. 

Since $X$ is geometrically integral over $k$, there is $v_0< \infty_k$ such that $$V_0=W_{v_0} \setminus \{ x_1, \cdots, x_l\}\neq \emptyset$$ is open and compact in $X(k_{v_0})$ by the Lang-Weil's estimation (see \cite[Theorem 7.7.1]{Po}). Consider an open subset 
$$ X(k_\infty) \times V_0 \times \prod_{v<\infty_k, \ v\neq v_0} W_v $$ of $X(\mathbf A_k)$.  Then  $$[ X(k_\infty) \times V_0 \times \prod_{v<\infty_k, \ v\neq v_0} W_v ] \cap X(\mathbf A_k)^{B}=  (\bigcup_{C\in [D]} C) \times V_0 \times  \prod_{v<\infty_k, \ v\neq v_0} W_v \neq \emptyset $$ contains no points in $X(k)$ any more. This contradicts that $X$ satisfies strong approximation with respect to $B$ off $\infty_k$.  
\end{proof} 

Example 6.2 in \cite{DW} and Example 8.3 in \cite{JaS} can be explained by Proposition \ref{nec}. 

\begin{exam} Let $X$ be a variety over $\Bbb Q$ defined by the equation 
$$x^2+y^2=t(t-2)(t-10) \subset \Bbb A_{\Bbb Q}^3 $$ and the quaternions $\beta_1=(t,-1)$ and $\beta_2=(t-2,-1)$ are the representative of $\overline{\Br(X)}$. Write $B$ the subgroup of $\Br (X)$ generated by $\beta_1$ and $\beta_2$. The set $X(\Bbb R)$ consists of two connected components. One connected component $D_1$ is given by $0\leq t\leq 2$, which is compact. The other one $D_2$ is given by $t\geq 10$, which is not compact. Since $\beta_2$ takes $\frac{1}{2}$ over $D_1$ but takes 0 over $D_2$, one gets $D_1\not\sim_B D_2$. The local points 
$$ (t_v, x_v, y_v) =\begin{cases} (5, x_5, y_5) \ \ \  & v=5 \\
(1,3,0) \ \ \ & v\neq 5 \end{cases} $$ exhibited in \cite[Example 6.2]{DW} imply that $D_1$ is admissible. By Proposition \ref{nec}, one concludes that $X$ does not satisfy strong approximation with Brauer-Manin obstruction off $\infty_k$. 
\end{exam}

\begin{exam} Let $X$ be a Del Pezzo surfaces of degree four defined by the equations 
$$ \begin{cases} x_0 (x_0+x_1) = x_2^2 + (x_2+x_4)^2 \\
(x_0+x_2) (x_0+2x_2) = 2x_1^2 + 3x_3^2 \end{cases} $$ in $\Bbb P^4$ and $U$ be an open subset of $X$ defined by $x_4\neq 0$. As pointed out in the first step of \cite[Example 8.1]{JaS}, the set $U(\Bbb R)$ consists of three connected components $D_0, D_1$ and $D_2$ and one of them is compact with a rational point. Assume this component is $D_0$. Then $D_0$ is admissible. 

Let $B$ be the subgroup of $\Br (U)$ generated by $\alpha_1$ and $\alpha_2$ in the second step of \cite[Example 8.1]{JaS}. We claim that $D_0\not\sim_B D_1$ and $D_0\not\sim_B D_2$. Indeed, suppose  $D_0\sim_B D_1$. Since $\alpha_1$ and $\alpha_2$ are not constant over $U(\Bbb R)$ by the fourth step in \cite[Example 8.1]{JaS}, one obtains that $$\alpha_1(D_0)=\alpha_1(D_1)\neq \alpha_1(D_2) \ \ \ \text{and} \ \ \ \alpha_2(D_0)= \alpha_2(D_1) \neq \alpha_2(D_2) . $$ This implies that 
$$(\alpha_1+\alpha_2)(D_0)=(\alpha_1+\alpha_2)(D_1)=(\alpha_1+\alpha_2)(D_2) $$ which contradicts that $\alpha_1+\alpha_2$ are not constant over $U(\Bbb R)$ in the fifth step in \cite[Example 8.1]{JaS}. Therefore $U$ does not satisfy strong approximation with Brauer-Manin obstruction off $\infty$ by the above claim and Proposition \ref{nec}. 
\end{exam}

To end up this section, we provide a refined version of strong approximation for $\Bbb A^1_k$ which is needed in the next section. There are already several refinements for strong approximation for $\Bbb A_k^1$ implicitly in \cite[Proposition 4.6]{CX1} and \cite[Theorem 6.2]{C}. 

\begin{defi}  Let $v\in \infty_k$ be a real place and $C_v$ be a subset of $k_v$. We say 

$\bullet$ $C_v$ is unbounded above if $\sup \{ x\in C_v \} = + \infty$. 

$\bullet$ $C_v$ is unbounded below if $\inf \{ x\in C_v \} = - \infty$. 

$\bullet$ $C_v$ is unbounded if $C_v$ is either unbounded above or unbounded below.

\end{defi}

\begin{prop} \label{ref-sa} Let $C=\prod_{v\in \infty_k} C_v$ be an open connected subset of $k_\infty$. Suppose that there is either a complex $v\in \infty_k$ with $C_v=\Bbb C$ or a real place $v\in \infty_k$ such that $C_v$ is unbounded. If $W\neq \emptyset$ is an open subset of $\mathbf A^f$, then $k\cap (C\times W)\neq \emptyset$. 
\end{prop}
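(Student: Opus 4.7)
The plan is to combine classical strong approximation for $\BA^1_k$ (density of $k$ in $\RA_k^{\{v_0\}}$) with Dirichlet's $S$-unit theorem. Let $v_0\in \infty_k$ be the place granted by the hypothesis, write $W=\prod_{v\in S_W}W_v\times \prod_{v\notin S_W\cup \infty_k}\frak o_{k_v}$ for a finite $S_W\sbt \Omega_k\setminus \infty_k$, and set $S:=\infty_k\cup S_W$.

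First I would apply strong approximation off $\{v_0\}$ to the open subset $\prod_{v\in \infty_k\setminus\{v_0\}}C_v\times W$ of $\RA_k^{\{v_0\}}$, producing $x_0\in k$ with $x_0\in C_v$ for each $v\in \infty_k\setminus\{v_0\}$, $x_0\in W_v$ for each $v\in S_W$, and $x_0\in \frak o_{k_v}$ for each finite $v\notin S$. If $v_0$ is complex, then $C_{v_0}=\BC$ imposes no condition at $v_0$ and $x_0$ already lies in $C\times W$, finishing this case.

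If instead $v_0$ is real with $C_{v_0}$ unbounded, say $(M,\infty)\sbt C_{v_0}$ for some $M\in \BR$, I would correct the $v_0$-coordinate by translating $x_0$ by an $S$-unit. Dirichlet's $S$-unit theorem says that $\frak o_{k,S}^\times$ has rank $|S|-1$ and that its logarithmic embedding in $\BR^S$ has full-rank image inside the hyperplane $\{(y_v)_{v\in S}:\sum_{v\in S}y_v=0\}$. The direction $(|S|-1)e_{v_0}-\sum_{v\in S\setminus \{v_0\}}e_v$ sits in this hyperplane, so, after replacing a unit by its square to secure positivity at the real place $v_0$, one can produce $u\in \frak o_{k,S}^\times$ with $u>0$ at $v_0$, $u_{v_0}$ as large as desired, and $|u|_v$ arbitrarily small at every other $v\in S$ in the respective archimedean or $v$-adic sense. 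Setting $x:=x_0+u\in k$, openness of $C_v$ (resp.\ $W_v$) at $v\in \infty_k\setminus\{v_0\}$ (resp.\ $v\in S_W$) combined with the smallness of $u$ there gives $x\in C_v$ (resp.\ $x\in W_v$); the inclusion $u\in \frak o_{k,S}$ gives $x\in \frak o_{k_v}=W_v$ at $v\notin S$; and the choice $u_{v_0}>M-x_{0,v_0}$ forces $x_{v_0}>M$, so $x_{v_0}\in C_{v_0}$.

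The main obstacle is the $S$-unit construction: one must use Dirichlet's theorem to realize the prescribed sign profile of $\log|u|_v$ uniformly across the other archimedean places in $\infty_k\setminus\{v_0\}$ and the non-archimedean places in $S_W$, together with positivity at $v_0$. A harmless degenerate case is $|S|=1$ (forcing $k=\BQ$, $S_W=\emptyset$, $v_0$ real), where the rank-zero unit group is insufficient but a sufficiently large rational integer trivially plays the role of $u$.
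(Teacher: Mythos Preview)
Your proof is correct and follows the same overall strategy as the paper: first use strong approximation off $\{v_0\}$ to find $x_0 \in k$ landing in $\prod_{v\neq v_0} C_v \times W$, then add a carefully chosen correction to push the $v_0$-coordinate into $C_{v_0}$ without disturbing the other coordinates. The difference lies in how the correction term is built. You invoke the full $S$-unit theorem with $S = \infty_k \cup S_W$ to produce a single $u\in\frak o_{k,S}^\times$ that is simultaneously large at $v_0$ and small at every other place of $S$, archimedean and non-archimedean alike. The paper instead separates these two tasks: assuming $k$ totally real and $k\neq\Bbb Q$, it takes a global unit $\epsilon\in\frak o_k^\times$ with $|\epsilon|_{v_0}>1$ and $|\epsilon|_v<1$ at the other archimedean places (ordinary Dirichlet), then multiplies a high odd power of $\epsilon$ by a product of auxiliary integers $\beta_v^{l_v}\in\frak o_k$ to force high $v$-adic valuation at the finitely many $v\in S_W$; the case $k=\Bbb Q$ is handled separately. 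Your route is more uniform and shrinks the degenerate case to $|S|=1$, whereas the paper's construction stays inside $\frak o_k$ throughout and is a bit more explicit. One small remark: your ``say $(M,\infty)\subset C_{v_0}$'' tacitly treats only the unbounded-above case, but the unbounded-below case is symmetric (replace $u$ by $-u^2$), so this is harmless.
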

\begin{proof} One only needs to consider the case that $k$ is totally real and there is $v_0\in \infty_k$ such that $C_{v_0}$ is unbounded. Therefore $C_{v_0}=(a, +\infty)$ or $(-\infty, a)$ for $a\in \Bbb R$. Without loss of generality, one can assume that $W=\prod_{v<\infty_k} W_v$. 

When $k=\Bbb Q$, the set $\Bbb Q\cap (C_{v_0}\times W) \neq \emptyset$ by Dirichlet's prime number theorem with modification on sign if necessary. 

Otherwise,  there is $\epsilon \in O_{k}^\times$ such that 
$$ |\epsilon|_{v_0} >1 \ \ \ \text{ and } \ \ \  |\epsilon|_{v}<1 \ \text{ for all $v\in \infty_k\setminus \{v_0\}$} $$  by \cite[33:8]{OM}. Let $\Sigma$ be a finite subset of $\Omega_k$ containing $\infty_k$ such that $W_v=\frak o_{k_v}$ for all $v \not \in \Sigma$. For each $v< \infty_k$, one can fix $\beta_v\in \frak o_k$ such that $ord_v(\beta_v)>0$ and $ord_{w}(\beta_v)=0$ for all finite $w\neq v$ by finiteness of class number of $\frak o_k$.  By strong approximation for ${\Bbb A}^1$, there is $a\in k$ such that  $$a\in k_{v_0}\times (\prod_{v\in \infty_k\setminus \{v_0\} } C_v) \times W . $$ Let $l_v$ be a sufficiently large integer such that 
$ a +  \beta_v^{l_{v}}\frak o_{k_v} \subseteq W_v$ for each $v\in \Sigma\setminus \infty_k$. Take a sufficiently large positive integer $N$ such that
$$b=a+ \epsilon^{2N+1} \prod_{v\in \Sigma \setminus \infty_k} \beta_v^{l_v} \in C_v $$ for all $v\in \infty_k\setminus \{v_0\}$ and  $b\in (a, +\infty)$ or $(-\infty, a)$ respectively at $v_0$ by replacing $\epsilon$ with $-\epsilon$ if necessary. Therefore $$b\in k\cap (C\times W)$$ as desired.
\end{proof}

\section{Fibration over $\Bbb A_k^1$ with an action of torus}

Let $X$ be a smooth and geometrically integral variety over a number field $k$. Suppose that $X\xrightarrow{f} \Bbb A_k^1$ is a surjective morphism over $k$ with geometrically integral generic fiber. There is an open dense subset $U$ of $\Bbb A_k^1$ over $k$ such that $f|_V: V=f^{-1}(U) \rightarrow U$ is smooth with geometrically integral fibers. Write $$\Bbb A_k^1\setminus U=\{P_1, \cdots, P_n \}$$ where $P_1, \cdots, P_n$ are the closed points over $k$ and $k_i=k(P_i)$ are the residue fields of $P_i$ for $1\leq i\leq n$. 
Let $D_i=f^{-1}(P_i)$ and $\{D_{i,j}\}_{j=1}^{g_i}$ be the set of irreducible components of $D_i$ over $k_i$ for $1\leq i\leq n$. Then one has the exact sequence 
with the residue maps $$  0\rightarrow \Br(X) \rightarrow  \Br(V) \xrightarrow {(\partial_{D_{i,j}})} \bigoplus_{i=1}^n \bigoplus_{j=1}^{g_i} H^1(D_{i,j}, \Bbb Q/\Bbb Z)$$ given by  \cite[(3.9)]{CT92}.
By the functoriality of residue maps and the Faddeev exact sequence (see \cite[\S 1.1 and \S 1.2]{CTSD}), one has the following commutative diagram 
\begin{equation} \label{residue} \xymatrix{\Br_a(U)\ar[r]_-{\cong}^-{\partial_U} \ar[d]_{f^*} & \bigoplus_{i=1}^n H^1(k_i,\BQ/\BZ)\ar[d]^{res}  \\
\Br_a(V)\ar[r]_-{(\partial_{D_{i,j}})}& \bigoplus_{i=1}^n \bigoplus_{j=1}^{g_i} H^1(D_{i,j},\BQ/\BZ)    } \end{equation}

Recall that $f$ is split if $D_{i, 1}$ is geometrically integral with multiplicity 1 in $D_i$ for all $1\leq i\leq n$. In this case, one has $\partial_{D_{i,1}}(b) \in H^1(k_i, \Bbb Q/\Bbb Z)$ for any $b\in \Br_1(V)$ with $1\leq i\leq n$.

\begin{prop} \label{fib-bm} Assume that $f$ is split.  Let $B$ be a finite subgroup of $\Br_1(V)$ and $F_i/k_i$ be a finite abelian extension such that  $\partial_{D_{i,1}}(B)|_{F_i} =0$ for all $1\leq i\leq n$.  Let $B_1$ be a finite subgroup of $\Br_1(V)$ such that the image of $B_1$ in $\Br_a(V)$ contains $$f^*(\partial_U^{-1}(\bigoplus_{i=1}^nH^1(F_i/k_i, \BQ/\BZ))) . $$ If $E=\prod_{v\in \infty_k} E_v$ is a connected component of $X(k_{\infty})$ such that $f(E_{v_0})$ is unbounded for some $v_0\in \infty_k$ when $k$ is totally real,  then 
$$ \bigcup_{c\in U(k)} X_c(\mathbf A_k)_{E_c}^{B+B_1} \ \ \ \text{is dense in} \ \ \ X(\mathbf A_k)_{E}^{ (B+B_1)\cap \Br_1(X)} $$ where $X_c$ is the fiber of $f$ over $c\in U(k)$ and $E_c=X_c(k_\infty) \cap E$. 
\end{prop}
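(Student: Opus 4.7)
The plan is to follow the classical fibration method for strong approximation, using the refined strong approximation on $\mathbb{A}^1_k$ (Proposition \ref{ref-sa}) to choose the base point and the correction group $B_1$ to absorb boundary contributions to the Brauer--Manin pairing via reciprocity.

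Starting from an adelic point $(P_v)\in X(\mathbf{A}_k)_E^{(B+B_1)\cap\Br_1(X)}$ with a basic open neighborhood $\prod_v W_v$, I would first choose a finite set $S_0\supset\infty_k$ of places of $k$ large enough that: $W_v=X(\mathfrak{o}_{k_v})$ for $v\notin S_0$; $f$ extends to a morphism of integral models smooth outside the horizontal closures of $P_1,\dots,P_n$ over $\mathrm{Spec}\,\mathfrak{o}_{k,S_0}$; every element of the finite group $B+B_1$ takes a constant value on each $W_v$ for $v\in S_0$ and has zero Hasse invariant on $X(\mathfrak{o}_{k_v})$ for $v\notin S_0$; and each extension $F_i/k_i$ is unramified above $S_0^c$. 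After shrinking the $W_v$ I may further assume $P_v\in V(k_v)$ with $f$ smooth at $P_v$ for all $v\in S_0$.

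Next, I would apply Proposition \ref{ref-sa} to the open neighborhood of $(f(P_v))$ inside $\mathbb{A}^1_k(\mathbf{A}_k)$; the unboundedness of $f(E_{v_0})$ provides exactly the hypothesis needed at $v_0$. This produces $c\in U(k)$ close to $f(P_v)$ for $v\in S_0\setminus\{v_0\}$, integral outside $S_0$, and lying in $f(E_{v_0})$ at $v_0$. Smoothness of $f$ at $P_v$ combined with the implicit function theorem lifts $c$ to $M_v'\in V(k_v)\cap W_v$ for $v\in S_0$, and the approximation ensures the archimedean components lie in $E$. For $v\notin S_0$ with $c\not\equiv P_i\pmod v$, the fiber $X_c$ has good reduction at $v$ and Hensel yields $M_v'\in X_c(\mathfrak{o}_{k_v})\subset W_v$; for the finitely many remaining $v$ where $c$ specializes into some $P_i$, the split hypothesis provides a geometrically integral component $D_{i,1}$ of multiplicity $1$, and Lang--Weil combined with Hensel produce $M_v'\in X_c(k_v)$ after absorbing further exceptional primes into $S_0$.

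To verify the Brauer--Manin condition, fix $b\in B+B_1$. By diagram \eqref{residue} together with $\partial_{D_{i,1}}(B)|_{F_i}=0$, the residues $\partial_{D_{i,1}}(b)$ all lie in $H^1(F_i/k_i,\mathbb{Q}/\mathbb{Z})$, so there exists $\alpha_b\in \Br_a(U)$ with $\partial_U(\alpha_b)=(\partial_{D_{i,1}}(b))_i$. Then $b-f^*\alpha_b$ has trivial residues along every $D_{i,1}$ and hence represents a class in $\Br_1(X)$; by the hypothesis on $B_1$, $f^*\alpha_b$ is congruent modulo $\Br(k)$ to an element $b_1\in B_1$, so $b-b_1\in (B+B_1)\cap \Br_1(X)$. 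The BM pairing of $b-b_1$ on $(M_v')$ equals that on $(P_v)$ by the constancy arrangements in $S_0$ and the vanishing of invariants outside $S_0$, so it vanishes by hypothesis. The remaining contribution is $\sum_v\mathrm{inv}_v(b_1(M_v'))=\sum_v\mathrm{inv}_v(\alpha_b(c))$, which is zero by global reciprocity on $\Br(k(t))$ applied to $c\in U(k)$.

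The main obstacle I anticipate is producing the local points above the bad primes $v\notin S_0$ where $c$ reduces into the discriminant locus: one must arrange the residue-field specialization at such primes so that $D_{i,1}$ acquires a smooth $k(v)$-point, which is where the split hypothesis and the abelian structure of $F_i/k_i$ enter essentially; controlling the interaction between this choice of $c$ and the Brauer--Manin bookkeeping of the last paragraph, via a careful and iterative enlargement of $S_0$, is the technical heart of the proof.
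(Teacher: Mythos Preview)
Your overall architecture is right---set up integral models, use Proposition \ref{ref-sa} to pick $c\in U(k)$, lift to local points via Hensel/Lang--Weil along the split component $D_{i,1}$---but the Brauer--Manin verification paragraph has a genuine gap, and it is precisely the step the paper handles differently.

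Your argument hinges on the decomposition: given $b\in B+B_1$, find $\alpha_b\in\Br_a(U)$ with $\partial_{D_{i,1}}(f^*\alpha_b)=\partial_{D_{i,1}}(b)$, then claim $b-f^*\alpha_b\in\Br_1(X)$ and $f^*\alpha_b\equiv b_1\in B_1\pmod{\Br(k)}$. Both claims fail in general. First, the split hypothesis only says each $D_i$ contains \emph{one} geometrically integral multiplicity-one component $D_{i,1}$; there may be further components $D_{i,j}$, $j\ge 2$, and $b-f^*\alpha_b$ can have nonzero residues along these, so it need not lie in $\Br_1(X)$. Second, the hypothesis on $B_1$ is that its image in $\Br_a(V)$ \emph{contains} $f^*\partial_U^{-1}(\bigoplus_i H^1(F_i/k_i,\BQ/\BZ))$; there is no control on the residues of an arbitrary element of $B_1$, so for $b\in B_1$ the class $\alpha_b$ need not land in $\partial_U^{-1}(\bigoplus_i H^1(F_i/k_i,\BQ/\BZ))$ and hence $f^*\alpha_b$ need not be congruent to anything in $B_1$.

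The paper avoids this decomposition entirely by inserting Harari's formal lemma \cite[Theorem 1.4]{CT01} \emph{before} choosing $c$: one enlarges $S$ and modifies the local data $x_v$ for $v\in S$ so that $\sum_{v\in S}\xi(x_v)=0$ for \emph{every} $\xi\in B+B_1$, not merely for $\xi\in(B+B_1)\cap\Br_1(X)$. With this upgraded orthogonality, the total pairing $\sum_v b(y_v)$ reduces to the out-of-$S$ contributions, and since the local points at the bad primes are chosen to specialize into $D_{i,1}^{sm}$, those contributions are computed \emph{solely} via $\partial_{D_{i,1}}$ (the other components $D_{i,j}$ never enter). One obtains $\sum_v b(y_v)=\sum_i\partial_{D_{i,1}}(b)(\tau_i)$ for some $(\tau_i)\in\bigoplus_i\Gal(F_i/k_i)$; applying this identity to the subgroup $f^*\partial_U^{-1}(\bigoplus_i H^1(F_i/k_i,\BQ/\BZ))\subset B_1$ together with reciprocity on $\Bbb A^1$ forces each $\tau_i=1$, hence the full pairing vanishes. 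Your iterative enlargement of $S_0$ cannot substitute for this: the formal lemma is what bridges the gap between orthogonality to $(B+B_1)\cap\Br_1(X)$ and orthogonality to $B+B_1$.
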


\begin{proof} Write $\Bbb A_k^1=\Spec(k[t])$ and $P_i=(p_i(t))$ where $p_i(t)$'s are the fixed irreducible polynomials over $k$ for $1\leq i\leq n$. Let $X'$ be an open subset of $X$ over $k$ such that the restriction $f|_{X'}: X'\rightarrow \Bbb A_k^1$ is smooth and $f|_{X'}^{-1}(P_i)$ contains the smooth part $D_{i,1}^{sm}$ of $D_{i,1}$ for $1\leq i\leq n$.

 For any open subset $$W=E\times \prod_{v<\infty_k} W_v \subset X(\mathbf A_k) \ \ \ \text{ with } \ \ \ W^{(B+B_1)\cap \Br_1(X)}\neq \emptyset , $$ there is a finite subset $S$ of $\Omega_k$ containing $\infty_k$ such that the following conditions hold.

(a)  The morphism $f: X\rightarrow \Bbb A_k^1$ and the open immersion $X'\hookrightarrow X$ are extended to their integral models $f: \mathcal X\rightarrow \Bbb A_{\frak o_{k, S}}^1$ and  $\mathcal X'\hookrightarrow \mathcal X$ over $\frak o_{k, S}$ such that the restriction map $\mathcal X'\rightarrow \Bbb A_{\frak o_{k, S}}^1$ is smooth. 

(b) The morphism $D_i\rightarrow P_i$ is extended to their integral models $\mathcal D_i \rightarrow \mathcal P_i$ over $\frak o_{k,S}$ such that $$\mathcal P_i=\Spec(\frak o_{k,S}[t]/(p_i(t)))=\Spec (\frak o_{k_i, S} ) $$ is smooth over $\frak o_{k,S}$ and $\mathcal D_i=f^{-1}(\mathcal P_i)$ for $1\leq i\leq n$. Moreover, $$ \Bbb A_{\frak o_{k,S}}^1 \setminus \mathcal U=\{ \mathcal P_1, \cdots, \mathcal P_n\} \ \ \ \text{and} \ \ \ \mathcal{X} \setminus \mathcal V = \{ \mathcal D_1, \cdots, \mathcal D_n \}$$  

(c) The closed immersion $D_{i,1} \hookrightarrow D_i$ is extended to their models $\mathcal D_{i, 1}\hookrightarrow \mathcal D_i$ over $\frak o_{k_i, S}$ such that the smooth points over residue fields $\mathcal D^{sm}_{i, 1}(k(w))\neq \emptyset$ for all primes $w$ of $k_i$ with $(w\cap k)\not \in S$ for $1\leq i\leq n$. All field extensions $F_i/k_i/k$ are unramified outside $S$ for $1\leq i\leq n$.   

(d) All elements in $(B+B_1)\cap \Br_1(X)$ take the trivial value over $W_v=\mathcal X(\frak o_{k_v})$ and all elements in $B+B_1$ take the trivial value over $\mathcal V(\frak o_{k_v})$ for all $v\not\in S$.

If there is a complex prime $v\in \infty_k$, then $E_v=X(\Bbb C)$ by \cite[Chapter 3, Theorem 3.5]{PR} and $f(E_v)=\Bbb C$. In this case, we define $W_v= E_v\cap V(k_v)$ for all $v\in \infty_k$. Otherwise, $k$ is totally real. Then there is a connected component $C_{v_0}$ of $V(k_{v_0})$ such that $f(E_{v_0}\cap C_{v_0})$ is unbounded. In this case, we define
$$ W_v= \begin{cases} E_{v_0} \cap C_{v_0}  \ \ \ & \text{$v=v_0$} \\
E_v\cap V(k_v) \ \ \ & \text{$v\in \infty_k\setminus \{ v_0 \}$.} \end{cases} $$

By Harari's formal lemma (see \cite[Theorem 1.4]{CT01}), one can enlarge $S$ such that there are $x_v\in W_v$ for all $v\in S$  with 
\begin{equation} \label{shara}  \sum_{v\in S} \xi(x_v)=0  \end{equation} for all $\xi\in (B+B_1)$.  By shrinking $W_v$ for $v\in S\setminus \infty_k$, one can assume that each element in $B+B_1$ takes a single value over $W_v$. Since $f$ is split, the set $f(\prod_{v\in\Omega_k} W_v)$ contains a non-empty open set in $\mathbf A_k$ such that $f(W_{v_0})$ is unbounded when $k$ is totally real. By Proposition \ref{ref-sa}, there is $t_0\in \frak o_{k,S}$ such that 
$ X_{t_0}(\mathbf A_k) \cap W\neq \emptyset $. 

If $v\not\in S$ with $\ord_v(p_i(t_0))>0$ for some $1\leq i\leq n$, then $p_i(t)$ splits in $\frak o_{k_v}$ by (b) and Hensel's lemma. This implies that there is a prime $w$ of $k_i$ above $v$ such that $(k_i)_w=k_v$. There is $y_v\in \mathcal X_{t_0}'(\frak o_{k_v})\subset \mathcal X(\frak o_{k_v})$ such that 
$$ y_v \mod v\in \mathcal D^{sm}_{i,1}(k(v)) \subset  \mathcal X_{t_0}'(k(v)) $$
by Hensel's lemma and (a) and (c) for $1\leq i\leq n$. 
One can extend these $y_v$ to $$(y_v)_{v\in \Omega_k} \in X_{t_0}(\mathbf A_k)\cap W $$ by taking any element in $W_v\cap X_{t_0}(k_v)$ for $v\in S$ and any element in $\mathcal X_{t_0}(\frak o_{k_v})$ for the rest of primes $v$ of $k$. 
Then
$$\sum_{v\in \Omega_k} b(y_v) = \sum_{v\not\in S} b(y_v) = \sum_{i=1}^n \sum_{v\not\in S, \ \ord_v(p_i(t_0))>0} b(y_v) $$ by (\ref{shara}) for all $b\in (B+B_1)$. 

When $\ord_v(p_i(t_0))>0$ for some $1\leq i\leq n$ and $v\not\in S$, then $\ord_v(p_j(t_0))=0$ for all $j\neq i$ by (b). Therefore there is $\tau_i\in \Gal(F_i/k_i)$ such that $$ b(y_v) =\partial_{D_{i, 1}}(b) (\tau_i)  \ \ \ \text{with}  \ \ v\not\in S \ \ \text{and} \ \  \ord_v(p_i(t_0))>0$$ 
by \cite[Corollary 2.4.3]{Ha94} and the choice of $y_v$ for all $b\in B+B_1$. One concludes that  
\begin{equation} \label{s-res-bm}\sum_{v\in \Omega_k} b(y_v) =  \sum_{i=1}^n \partial_{D_{i,1}} (b) (\tau_i) \in \BQ/\BZ \end{equation}  
with $(\tau_i)_{i=1}^n \in \bigoplus_{i=1}^n \Gal(F_i/k_i)$ for all $b\in (B+B_1)$.

 Since 
$$ \sum_{v\in \Omega_k} f^*(\xi) (y_v) = \sum_{v\in \Omega_k} \xi (t_0) =0  $$  for all $\xi \in \partial_U^{-1} (\bigoplus_{i=1}^n H^1(F_i/k_i, \BQ/\BZ))$ by the functoriality of Brauer-Manin pairing and the reciprocity law, one obtains $$\chi ((\tau_i)_{i=1}^n)=0 \ \ \ \text{for all} \ \chi \in \bigoplus_{i=1}^n H^1(F_i/k_i, \BQ/\BZ)$$ by (\ref{residue}) and (\ref{s-res-bm}). 
This implies that $(\tau_i)_{i=1}^n$ is the trivial element in $\bigoplus_{i=1}^n \Gal(F_i/k_i)$. Therefore 
$$ (y_v)_{v\in \Omega_k} \in X_{t_0}(\mathbf A_k)^{B+B_1} \cap W$$ by (\ref{s-res-bm}) as desired. 
\end{proof}


\begin{rem}\label{sec} In the proof of Proposition \ref{fib-bm}, the assumption that $f$ is split is needed only for the fact that the set $f(\prod_{v\in\Omega_k} W_v)$ contains a non-empty open set in $\mathbf A_k$. When $f$ has a section over $k$, this fact is also true. Instead that $f$ is split, the same result still holds when $f$ has a section over $k$.
\end{rem}

Recall that a geometrically integral variety $Y$ over a field $k$ is quasi-trivial by \cite[Definition 1.1]{CT08} if the following two conditions hold:

(i)   $\bar k[Y]^\times/\bar k^\times$ is a permutation Galois module;

(ii) $\Pic(Y_{\bar k})=0 $.

 \begin{lem}\label{sansuc} Let $U$ be a quasi-trivial variety and $T$ be a torus over a number field $k$. 
If  $V \rightarrow U$ is a torsor under $T$, then the Sansuc morphism $\Br_a(V) \xrightarrow{\lambda}  \Br_a(T)$ by \cite[Lemma 6.4]{S} is surjective. 
 \end{lem}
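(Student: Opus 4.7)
The plan is to apply Sansuc's exact sequence for a torus torsor and to exploit the vanishing $\Pic(U_{\bar k}) = 0$ built into the definition of quasi-triviality.

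First, I would recall Sansuc's fundamental geometric sequence of $\Gamma_k$-modules attached to the torsor $V \to U$ under the torus $T$:
$$0 \to \bar k[U]_{\bar k}^\times/\bar k^\times \to \bar k[V]_{\bar k}^\times/\bar k^\times \to \hat T \to \Pic(U_{\bar k}) \to \Pic(V_{\bar k}) \to 0 .$$
Under quasi-triviality, $\Pic(U_{\bar k}) = 0$, which in turn forces $\Pic(V_{\bar k}) = 0$, so the sequence collapses to a short exact sequence
$$0 \to M_U \to M_V \to \hat T \to 0 ,$$
where $M_U := \bar k[U]_{\bar k}^\times/\bar k^\times$ is a permutation $\Gamma_k$-module (by the other half of quasi-triviality) and $M_V := \bar k[V]_{\bar k}^\times/\bar k^\times$.

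Second, I would splice this short exact sequence with the Hochschild--Serre spectral sequences for $U$, $V$, and $T$, using the vanishings of the geometric Picard groups. This is exactly the manipulation performed in Sansuc's Lemma 6.4 of [S], and it yields the exact sequence on algebraic Brauer groups
$$\cdots \to \Br_a(U) \to \Br_a(V) \xrightarrow{\lambda} \Br_a(T) \xrightarrow{\delta} H^1(k, \Pic(U_{\bar k})) \to \cdots .$$
Since $U$ is quasi-trivial, $\Pic(U_{\bar k}) = 0$, so the obstruction group $H^1(k, \Pic(U_{\bar k}))$ vanishes and $\lambda$ is surjective.

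The only (minor) obstacle is in cleanly identifying the form of Sansuc's Lemma 6.4 that places the cokernel of $\lambda$ inside $H^1(k, \Pic(U_{\bar k}))$; this is a standard consequence of the spectral sequence derivation above and does not require any new input beyond the fundamental sequence and the two vanishings of geometric Picard.
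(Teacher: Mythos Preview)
Your overall strategy is the same as the paper's: extract the short exact sequence
\[
0 \longrightarrow M_U \longrightarrow M_V \longrightarrow \hat T \longrightarrow 0
\]
from Sansuc's Proposition~6.10, take Galois cohomology, and identify the $H^2$ terms with $\Br_a$ via Hochschild--Serre (using $\Pic(U_{\bar k})=\Pic(V_{\bar k})=\Pic(T_{\bar k})=0$). Up to that point you and the paper agree.

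The gap is in your second step. Once the short exact sequence above is in hand, the long exact cohomology sequence reads
\[
H^2(k,M_U) \to H^2(k,M_V) \to H^2(k,\hat T) \to H^3(k,M_U),
\]
so the obstruction to surjectivity of $\lambda$ sits in $H^3(k,M_U)$, \emph{not} in $H^1(k,\Pic(U_{\bar k}))$. You have already consumed the hypothesis $\Pic(U_{\bar k})=0$ to produce the short exact sequence; it cannot reappear as the receiving group for $\delta$. Your ``minor obstacle'' of locating a version of Sansuc's Lemma~6.4 with cokernel in $H^1(k,\Pic(U_{\bar k}))$ is in fact the whole point: no such sequence exists here, and what you wrote would make the lemma true over an arbitrary base field, which it is not.

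What the paper does, and what your sketch is missing, is to kill $H^3(k,M_U)$. This is where the other half of quasi-triviality is actually used: since $M_U=\bar k[U]^\times/\bar k^\times$ is a permutation module, Shapiro's lemma reduces $H^3(k,M_U)$ to a direct sum of groups $H^3(K_i,\Bbb Z)$ for finite extensions $K_i/k$, and these vanish because $k$ is a number field (the paper cites \cite[Chapter~I, Corollary~4.17]{Mi06}). You flagged the permutation property of $M_U$ in your first paragraph but never invoked it; that, together with the arithmetic input specific to number fields, is the missing ingredient.
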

 \begin{proof}  Since $V \rightarrow U$ is a torsor under $T$,  one gets an exact sequence 
 $$ 1 \rightarrow \bar{k}[U]^\times /\bar k^\times \rightarrow \bar{k} [V]^\times /\bar k^\times \rightarrow \bar k [T]^\times /\bar k^\times \rightarrow \Pic(U_{\bar k}) \rightarrow \Pic(V_{\bar k}) \rightarrow \Pic(T_{\bar k})  $$ by \cite[Proposition 6.10]{S}. Since $\Pic(U_{\bar k})= \Pic(T_{\bar k})=1$, one obtains $\Pic(V_{\bar k})=1$ and 
  \begin{equation} \label{coh-units} H^2(k, \bar{k}[U]^\times /\bar k^\times) \rightarrow H^2(k, \bar{k} [V]^\times /\bar k^\times ) \rightarrow H^2(k, \bar k [T]^\times /\bar k^\times) \rightarrow H^3(k, \bar{k}[U]^\times /\bar k^\times )  \end{equation}
by using Galois cohomology for the above short exact sequence.  Since $\bar{k}[U]^\times /\bar k^\times$ is a permutation $\Gal(\bar k/k)$-module, one obtains $H^3(k, \bar{k}[U]^\times /\bar k^\times )=0 $ by Shapiro lemma (see \cite[Chapter 1, (1.6.4) Proposition]{NSW}) and \cite[Chapter I, Corollary 4.17]{Mi06}. The Hochschild-Serre spectral sequence implies the canonical isomorphisms
$$ H^2(k, \bar{k} [V]^\times /\bar k^\times )\cong \Br_a(V) \ \ \ \text{and} \ \ \ H^2(k, \bar k [T]^\times /\bar k^\times)\cong \Br_a(T) .$$
The result follows from (\ref{coh-units}).
 \end{proof}

Now we further assume that $X$ admits an action of a torus $T$ over $k$. Namely, there is a morphism 
 $ T\times_k X \xrightarrow{\rho} X$ over $k$ satisfying the properties listed in \cite[Definition 0.3]{GIT}.

\begin{prop}\label{assum} Let $X$ be a smooth variety with an action of a torus $T$ over a number field $k$.  Suppose that $X\xrightarrow{f} Y$ is a morphism over  $k$ such that  $ f^{-1}(Z) \xrightarrow{f} Z$ is a torsor under $T$ where $Z$ is an open dense and quasi-trivial sub-variety of $Y$ over $k$. 

If $E$ is a connected component of $X(k_\infty)$ and $W=\prod_{v<\infty_k} W_v$ is an open compact subset of $X(\mathbf A_k^f)$, then there is a finite subgroup $B\subset \Br_a(f^{-1}(Z))$ such that 
$$ (E\times W)\cap X_c(\mathbf A_k)^B \neq \emptyset  \ \ \ \Longleftrightarrow \ \ \ (E\times W) \cap X_c(\mathbf A_k)^{\Br_a(X_c)} \neq \emptyset $$ for all $c\in Z(k)$. 
\end{prop}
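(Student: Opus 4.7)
The plan is to transfer the Brauer--Manin obstruction on each fiber $X_c$ to one on the torus $T$ via the torsor structure, detect a finite controlling part using the compactness of $W$, and lift back to $\Br_a(f^{-1}(Z))$ through the Sansuc surjection $\lambda$ of Lemma~\ref{sansuc}. The reverse implication of the equivalence is immediate since any $B\subseteq \Br_a(f^{-1}(Z))$ restricts into $\Br_a(X_c)$, so the entire content is the forward direction.

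The first ingredient I would establish is a canonical identification $\Br_a(X_c)\cong \Br_a(T)$: because $X_c$ is a $T$-torsor, $X_{c,\bar k}\cong T_{\bar k}$ as a $T_{\bar k}$-variety, so $\bar k[X_c]^\times/\bar k^\times\cong \hat T$ canonically as Galois modules. By the functoriality of the Sansuc construction in Lemma~\ref{sansuc}, the restriction $\iota_c^*\colon\Br_a(f^{-1}(Z))\to\Br_a(X_c)$ factors as $\lambda$ followed by this identification, so the image $\iota_c^*(B)=\lambda(B)$ inside $\Br_a(T)$ is the same subgroup for every $c\in Z(k)$. The $T$-action then yields the key translation identity
\[
\sum_{v\in\Omega_k}b(t_v\cdot x_v)-\sum_{v\in\Omega_k}b(x_v)=\sum_{v\in\Omega_k}b_T(t_v),
\]
for $(x_v)\in X_c(\mathbf A_k)$, $(t_v)\in T(\mathbf A_k)$, and $b\in \Br_a(X_c)$ corresponding to $b_T\in\Br_a(T)$. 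This reduces the forward direction to the following assertion: any adelic point of $X_c$ in $E\times W$ orthogonal to $B$ can be translated by a suitable element of $T(\mathbf A_k)$, staying in $E\times W$, so as to kill the full Brauer--Manin pairing against $\Br_a(X_c)$.

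To make this assertion true, I would construct a finite subgroup $B_T\subset \Br_a(T)$, depending on $E$ and $W$ only, with the following property: for every $c\in Z(k)$ and every $(x_v)\in (E\times W)\cap X_c(\mathbf A_k)$ orthogonal to $B_T$ under the canonical identification, there exists $(t_v)\in T(\mathbf A_k)$ satisfying $t_v\cdot x_v\in E_v\times W_v$ and $\sum_v b_T(t_v)=-\sum_v b(x_v)$ for all $b\in \Br_a(T)$. I then define $B$ to be any finite subgroup of $\Br_a(f^{-1}(Z))$ mapping onto $B_T$ under $\lambda$, which is possible by Lemma~\ref{sansuc}. The translation identity immediately yields $(t_v\cdot x_v)\in (E\times W)\cap X_c(\mathbf A_k)^{\Br_a(X_c)}$, completing the forward implication.

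The main obstacle lies in constructing $B_T$. The naive candidate---the annihilator in $\Br_a(T)$ of the open compact neighborhood in $T(\mathbf A_k)$ carved out by $W$---need not have finite cokernel, so a more delicate approach is required. The remedy combines: (i) the smoothness of an integral model of $W$ at almost all places, forcing Brauer classes to pair trivially with maximal compact subgroups outside a finite set of places; (ii) Sansuc's description of $T(\mathbf A_k)^{\Br_a(T)}/T(k)$ and strong approximation with Brauer--Manin obstruction for the torus $T$, which provides the needed element $(t_v)$; and (iii) the fact that the torsor class of $X_c$ enters only as a global shift absorbed by the reciprocity law, which yields the uniformity in $c$.
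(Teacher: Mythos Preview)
Your overall strategy matches the paper's: identify $\Br_a(X_c)\cong\Br_a(T)$ via Sansuc's construction, use the translation identity to transfer the problem to $T$, choose a finite $B_T\subset\Br_a(T)$ controlling the pairing, and lift it through $\lambda$ using Lemma~\ref{sansuc}. The commutativity of the triangle $\iota_c^*=\lambda_c^{-1}\circ\lambda$ (your first ingredient) is exactly the paper's diagram~(\ref{sansuc-cd}), and your translation identity is \cite[Proposition~2.9]{CDX}.

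However, there is a genuine gap in your construction of $B_T$. You dismiss as ``naive'' precisely the candidate the paper uses, and then your points (i)--(iii) remain vague. The paper's $B_T$ \emph{is} the annihilator in $\Br_a(T)$ of the open subgroup
\[
\St(W)=T(k_\infty)^+\times\prod_{v<\infty_k}\St(W_v),\qquad \St(W_v)=\{g\in T(k_v):g\cdot W_v=W_v\},
\]
and the point you miss is that this annihilator is \emph{finite}. This follows from two inputs: Demarche's duality \cite[Theorem~3.19]{D11}, which puts $\Hom(\Br_a(T),\BQ/\BZ)$ in an exact sequence with $T(\RA_k)/\overline{T(k)T(k_\infty)^+}$ and $\Sha^1(T)$; and the finiteness of $T(\RA_k)/(T(k)\cdot\St(W))$, which is a consequence of finiteness of class numbers and finite generation of $S$-units for tori \cite[Theorems~6.15 and~8.1]{PR}. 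Pontryagin-dualizing the quotient map gives that $B_1/\Br(k)$ is finite. Your (i) and (ii) gesture toward these facts but never combine them into this conclusion.

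Once $B_T=B_1/\Br(k)$ is in hand, the forward implication is cleaner than you suggest: if $(x_v)\in(E\times W)\cap X_c(\RA_k)^B$, then $(x_v)\circ\lambda_c^{-1}$ kills $B_1/\Br(k)$, so by the dualized exact sequence it equals the image of some $t\cdot(s_v)$ with $t\in T(k)$ and $(s_v)\in\St(W)$. Then $(s_v^{-1}\cdot x_v)$ stays in $E\times W$ (by definition of $\St(W)$) and is orthogonal to all of $\Br_a(X_c)$ by the translation identity. No separate ``global shift'' argument for the torsor class is needed; it is absorbed automatically because $\lambda_c$ is an isomorphism.
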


\begin{proof} Let 
$$ \St(W_v)= \{ g\in T(k_v):  g \cdot W_v = W_v \}  $$ for $v<\infty_k$. 
Since $W_v$ is an open compact subset of $X(k_v)$ with $W_v= \mathcal X(\frak o_{k_v})$ for almost all $v$ for an integral model $\mathcal X$ of $X$, the group $\St(W_v)$ is an open subgroup of $T(k_v)$ such that $\St(W_v)=\mathcal T(\frak o_{k_v})$ for almost all $v$ for an integral model $\mathcal T$ of $T$.
Then 
$$ \St(W) =  T^+(k_\infty) \times \prod_{v<\infty_k} \St(W_v)  $$
is an open subgroup of $T(\mathbf A_k)$, where $T^+(k_\infty)$ is the connected Lie subgroup $T(k_{\infty_k})$. Let $$B_1=\{ \xi\in \Br_1(T): \ \sum_{v\in \Omega_k} \xi(\sigma_v)=0 \ \text{ for all $(\sigma_v)_{v\in \Omega_k}\in \St(W)$}\} . $$ 
 By \cite[Theorem 3.19]{D11}, one has the following commutative diagram of short exact sequences
\begin{equation}\label{finite-br} \xymatrix{ 0\ar[r] & T(\mathbf A_k)/\overline{T(k)\cdot T^+(k_\infty)}\ar[r]\ar@{->>}[d] & \Hom(\Br_a(T), \Bbb Q/\Bbb Z) \ar[r]\ar@{->>}[d] & \Sha^1(T) \ar[d]^{id} \ar[r] & 0  \\
0 \ar[r] & T(\mathbf A_k)/(T(k)\cdot \St(W)) \ar[r] & \Hom(B_1/\Br(k), \Bbb Q/\Bbb Z) \ar[r] & \Sha^1(T) \ar[r] & 0 } \end{equation}
and $B_1/\Br(k)$ is finite by \cite[Theorem 6.15 and Theorem 8.1]{PR}. 

Let $B$ be a finite subgroup of $\Br_a(f^{-1}(Z))$ such that  $\lambda (B)=B_1/\Br(k)$ where $\lambda$ is the Sansuc morphism in Lemma \ref{sansuc}.  If $$ (E\times W)\cap X_c(\mathbf A_k)^B \neq \emptyset $$ for $c\in Z(k)$,  there is $(x_v)_{v\in \Omega_k} \in (E\times W)\cap X_c(\mathbf A_k)$ which can be viewed  $$(x_v)_{v\in \Omega_k} \in \Hom(\Br_a(X_c), \Bbb Q/\Bbb Z) \ \ \ \text{such that} \ \ \  (x_v)_{v\in \Omega_k} |_B=0 .$$ By \cite[Lemma 6.8]{S}, there is a canonical isomorphism $\lambda_c: \Br_a(X_c)\xrightarrow{\cong} \Br_a(T)$ such that the following diagram commutes 
\begin{equation}\label{sansuc-cd} \xymatrix{ \Br_a(f^{-1}(Z)) \ar[d]_{\lambda} \ar[r]^{i_c^*} & \Br_a(X_c)\ar[ld]^{\lambda_c}  \\
 \Br_a(T)} \end{equation} where $i_c: X_c\hookrightarrow f^{-1}(Z)$ is the closed immersion. Since $$(x_v)_{v\in \Omega_k} \circ \lambda_c^{-1} \in  \Hom (\Br_a(T), \Bbb Q/\Bbb Z)$$ with $$ (x_v)_{v\in \Omega_k} \circ \lambda_c^{-1}(B_1/\Br(k))=(x_v)_{v\in \Omega_k} \circ \lambda_c^{-1}\circ \lambda (B)=i_c^*(B)((x_v)_{v\in \Omega_k})=0 $$ by (\ref{sansuc-cd}), 
 there are $t\in T(k)$ and $(s_v)_{v\in \Omega_k} \in \St(W)$ such that 
 \begin{equation}\label{modif}  t\cdot(s_v)_{v\in \Omega_k}=  (x_v)_{v\in \Omega_k} \circ \lambda_c^{-1} \in  \Hom (\Br_a(T), \Bbb Q/\Bbb Z) \end{equation} by (\ref{finite-br}). Consider $$(s_v^{-1}\cdot x_v)_{v\in \Omega_k} \in (E\times W)\cap X_c(\mathbf A_k) . $$ Then
$$ b((s_v^{-1} \cdot x_v)_{v\in \Omega_k})= \sum_{v\in \Omega_k} \lambda_c(b)(s_v^{-1}) + \sum_{v\in \Omega_k} b(x_v) =-\sum_{v\in \Omega_k} \lambda_c(b)(t\cdot s_v)+ \sum_{v\in \Omega_k} b(x_v) =0 $$ for all $b\in \Br_a(X_c)$ by (\ref{modif}) and \cite[Proposition 2.9]{CDX}. Namely, 
$$ (s_v^{-1}\cdot x_v)_{v\in \Omega_k} \in (E\times W) \cap X_c(\mathbf A_k)^{\Br_a(X_c)} \neq \emptyset $$ as desired. 
\end{proof}

Combining Proposition \ref{fib-bm} and Proposition \ref{assum}, one obtains the main result of this section.

\begin{thm} \label{main-1} Let $X\xrightarrow{f} \BA_k^1$ be a surjective morphism over a number field $k$. Assume that $X$ admits an action of a torus $T$ over $k$ such that the generic fiber of $f$ is a torsor under $T$. Suppose that each equivalent class of admissible connected components of $X(k_\infty)$ contains a connected component $E=\prod_{v\in \infty_k} E_v$ such that $f(E_v)$ is unbounded for some $v\in \infty_k$ when $k$ is totally real. If $f$ is split or has a section over $k$, then $X$ satisfies strong approximation off $\infty_k$ with respect to $\Br_1(X)$. 
\end{thm}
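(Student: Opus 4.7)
The plan is to combine Proposition~\ref{assum} and Proposition~\ref{fib-bm} (with Remark~\ref{sec} in the section case) with strong approximation with Brauer--Manin obstruction for torsors under tori, using the equivalence relation of Section~2 to replace the given archimedean component by one having the required unboundedness.

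Let $W = \prod_{v \in \Omega_k} W_v \subset X(\mathbf A_k)$ be a non-empty open subset with $W \cap X(\mathbf A_k)^{\Br_1(X)} \neq \emptyset$; the goal is to produce $y \in X(k)$ whose finite-adelic image lies in $W^f := \prod_{v < \infty_k} W_v$. Fix $(x_v)\in W \cap X(\mathbf A_k)^{\Br_1(X)}$ and let $D$ be the connected component of $X(k_\infty)$ containing $(x_v)_{v\in\infty_k}$. Since $D$ is then admissible, the hypothesis provides an admissible component $E = \prod_{v \in \infty_k} E_v$ with $D \sim_{\Br_1(X)} E$ such that, when $k$ is totally real, $f(E_{v_0})$ is unbounded for some $v_0\in\infty_k$.

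Next I assemble the finite Brauer subgroups required by Proposition~\ref{fib-bm}. Let $U \subset \mathbf A^1_k$ be the open dense subset over which $f$ is a torsor under $T$; this $U$ is quasi-trivial, so Proposition~\ref{assum} applied with $Z=U$ to the pair $(E,W^f)$ yields a finite subgroup $B_0\subset \Br_a(f^{-1}(U))$, which I lift to a finite subgroup of $\Br_1(f^{-1}(U))$. For each closed point $P_i$ of $\mathbf A^1_k\setminus U$, I choose a finite abelian extension $F_i/k_i$ killing the finite group $\partial_{D_{i,1}}(B_0)\subset H^1(k_i,\BQ/\BZ)$, and then take a finite subgroup $B_1\subset \Br_1(f^{-1}(U))$ containing $f^{*}(\partial_U^{-1}(\bigoplus_i H^1(F_i/k_i,\BQ/\BZ)))$ as in Proposition~\ref{fib-bm}. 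Set $B := (B_0 + B_1)\cap \Br_1(X)$, a finite group. Shrinking $W^f$ if necessary so that each element of $B$ is constant on each $W_v$, the inclusion $D\times W^f \subset X(\mathbf A_k)^B$ follows from $(x_v)\in X(\mathbf A_k)^{\Br_1(X)}\subset X(\mathbf A_k)^B$, and the lemma preceding Proposition~\ref{nec} transfers this to $E\times W^f \subset X(\mathbf A_k)_E^B$.

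Proposition~\ref{fib-bm} (or Remark~\ref{sec} when $f$ only has a section) now produces $c\in U(k)$ and $(y_v) \in X_c(\mathbf A_k)_{E_c}^{B_0+B_1}$ with finite-adelic part lying inside $W^f$. In particular $(E\times W^f) \cap X_c(\mathbf A_k)^{B_0}\neq\emptyset$, so Proposition~\ref{assum} upgrades this to $(E\times W^f) \cap X_c(\mathbf A_k)^{\Br_a(X_c)}\neq\emptyset$. Since $X_c$ is a torsor under $T$, its geometric Brauer group vanishes, and strong approximation with Brauer--Manin obstruction for such torsors off $\infty_k$ (in the spirit of diagram~(\ref{finite-br})) yields $y\in X_c(k)\subset X(k)$ whose finite-adelic image lies in $W^f$, completing the proof. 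The main difficulty I anticipate is the careful coordination of $B_0$ and $B_1$: $B_0$ must be large enough for Proposition~\ref{assum} to upgrade $B_0$-orthogonality on the fiber $X_c$ to full $\Br_a(X_c)$-orthogonality, while $B_1$ must precisely kill the vertical residues of $B_0$ so that Proposition~\ref{fib-bm} delivers a point over a rational base in $U(k)$. Once both are combined into the single finite group $B$, the equivalence lemma of Section~2 allows $E$ to replace $D$ without losing the open test neighbourhood~$W^f$.
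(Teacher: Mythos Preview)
Your proposal is correct and follows essentially the same route as the paper's proof: reduce to a suitable connected component $E$ via the equivalence relation of Section~2, apply Proposition~\ref{assum} to manufacture the finite group $B_0$, feed $B_0$ (together with the auxiliary $B_1$) into Proposition~\ref{fib-bm} (or Remark~\ref{sec}) to land on a good fibre $X_c$ with $c\in U(k)$, and finish with strong approximation for the torus torsor $X_c$. The only cosmetic difference is that the paper passes directly from $(D\times W^f)\cap X(\mathbf A_k)^{\Br_1(X)}\neq\emptyset$ to $(E\times W^f)\cap X(\mathbf A_k)^{\Br_1(X)}\neq\emptyset$ without shrinking $W^f$ or invoking the lemma before Proposition~\ref{nec}, and in the last step it cites \cite[Corollary 8.7]{S} (triviality of the torsor) and \cite[Theorem 3.19]{D11} rather than diagram~(\ref{finite-br}); you may want to sharpen your final citation accordingly.
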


\begin{proof} For any open subset $X(k_\infty)\times \prod_{v<\infty_k} W_v$ of $X(\mathbf A_k)$ with $$(X(k_\infty)\times \prod_{v<\infty_k} W_v)\cap X(\mathbf A_k)^{\Br_1(X) }\neq \emptyset, $$ there is a connected component $E$ of $X(k_\infty)$ such that 
$$ (E\times \prod_{v<\infty_k} W_v) \cap X(\mathbf A_k)^{\Br_1(X) }\neq \emptyset $$ and 
$f(E)$ is unbounded.  Since $X$ admits an action of a torus $T$ over $k$ such that the generic fiber of $f$ is a torsor under $T$, there is an open dense subset $U$ of $\BA_k^1$ over $k$ with $V=f^{-1}(U)$ such that $f|_V: V \rightarrow U$ is a torsor under $T$. By Proposition \ref{assum}, there is a finite subgroup $B\subset \Br_1(V)$ such that 
$$ (E\times \prod_{v<\infty_k} W_v)\cap X_c(\mathbf A_k)^B \neq \emptyset  \ \ \ \Longleftrightarrow \ \ \ (E\times \prod_{v<\infty_k} W_v) \cap X_c(\mathbf A_k)^{\Br_a(X_c)} \neq \emptyset $$ for all $c\in U(k)$.  Let $B_1$ be a finite subgroup of $\Br_1(V)$ as defined in Proposition \ref{fib-bm} for $B$. Then there is $c_0\in U(k)$ such that $$(E\times \prod_{v<\infty_k} W_v)\cap X_{c_0} (\mathbf A_k)^{B+B_1} \neq \emptyset $$ by Proposition \ref{fib-bm} and Remark \ref{sec}. Therefore 
$$(E\times \prod_{v<\infty_k} W_v) \cap X_{c_0}(\mathbf A_k)^{\Br_a(X_{c_0})} \neq \emptyset . $$ This implies that $X_{c_0}$ is a trivial torsor by \cite[Corollary 8.7]{S} and 
$$ X_{c_0}(k) \cap (E\times W) \neq \emptyset $$ by \cite[Theorem 3.19]{D11}. 
\end{proof}

\begin{cor} \label{app-neq} Let $X$ be the smooth locus of the following affine variety  
$$  \prod_{i=1}^m N_{L_i/k}(x_i) = c\prod_{j=1}^n p_j(t)^{e_j} \ \ \ \text{with} \ \ \ c\in k^\times $$ 
where $L_i/k$'s are finite extensions of number fields and $p_j(t)$'s are distinct irreducible monic polynomials over $k$ and $e_j$'s are positive integers. 
If for each $1\leq j \leq n$ there is $L_i$ such that  either $L_i$ can be imbedded in $k[t]/(p_j(t))$ over $k$ or $[L_i: k] \mid e_j$ with $1\leq i\leq m$, then $X$ satisfies strong approximation off $\infty_k$ with respect to $\Br_1(X)$.  
\end{cor}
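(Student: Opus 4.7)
The plan is to apply \autoref{main-1} to the morphism $f\colon X\to\BA_k^1$ given by the $t$-coordinate projection, with the torus $T=\Res^1_{L/k}(\BG_m)$ acting on $X$ by multiplication on the $x$-variable. This action preserves the defining equation since $N_{L/k}(\tau x)=N_{L/k}(\tau)\,N_{L/k}(x)=N_{L/k}(x)$ for $\tau\in T$, and the generic fibre of $f$ over $U=\{t:\prod_i p_i(t)\neq 0\}$ is a torsor under $T$. The archimedean unboundedness condition of \autoref{main-1} follows from the shape of the norm equation: at any real prime $v$ and on any admissible connected component of $X(k_v)$, one can let $|t|\to\infty$ with the right-hand side $c\prod p_i(t)^{e_i}$ lying in the image of the norm from $L\otimes_k k_v$, so $f$ is unbounded on that component.

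The central task is to verify the ``$f$ split or $f$ has a section'' hypothesis. Set $J=\{i:L\hookrightarrow k_i\}$ and $I=\{i:[L:k]\mid e_i\}\setminus J$; by assumption $J\cup I=\{1,\ldots,n\}$. For $j\in J$, the embedding $L\hookrightarrow k_j$ gives a decomposition $L\otimes_k k_j\cong k_j\oplus L_j'$ with norm factorisation $N_{L\otimes k_j/k_j}(x)=x_1\cdot N_{L_j'/k_j}(x_2)$, so the fibre $f^{-1}(P_j)=\Spec(k_j[x]/(N))$ contains the geometrically integral affine component $\{x_1=0\}$ of multiplicity one and is therefore split. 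For $i\in I$, the identity $p_i(t)^{e_i}=N_{L/k}(p_i(t)^{e_i/[L:k]})$ (valid since $p_i^{e_i/[L:k]}\in k[t]\subset L[t]$ and $N_{L/k}(a)=a^{[L:k]}$ for $a\in k$) together with the substitution $x=y\cdot\prod_{i\in I}p_i(t)^{e_i/[L:k]}$ allows me to pass to the auxiliary smooth variety
\[
X_0\colon\ N_{L/k}(y)=c\prod_{j\in J}p_j(t)^{e_j}.
\]
The induced $k$-morphism $\phi\colon X_0\to X$, $(y,t)\mapsto\bigl(y\prod_{i\in I}p_i(t)^{e_i/[L:k]},\,t\bigr)$, is biregular over $\{\prod_{i\in I}p_i(t)\neq 0\}$. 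On $X_0$ every remaining degenerate fibre lies over some $P_j$ with $j\in J$ and is split, so \autoref{main-1} applies to $X_0$ and yields strong approximation off $\infty_k$ with respect to $\Br_1(X_0)$.

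Strong approximation for $X$ is then transferred from $X_0$ along $\phi$. Given $(x_v,t_v)\in X({\mathbf A}_k)^{\Br_1(X)}$ in a prescribed open neighbourhood, I would use smoothness of $X$ together with the implicit function theorem to perturb at the finitely many places $v$ where some $p_i(t_v)=0$ for $i\in I$, replacing $(x_v,t_v)$ by a nearby point of $X(k_v)$ with $\prod_{i\in I}p_i(t_v)\neq 0$ in the same connected component of $X(k_v)$ (so that Brauer--Manin values, being locally constant, are preserved). The perturbed adelic point lifts via $\phi^{-1}$ to an adelic point of $X_0$ which, by functoriality of the Brauer--Manin pairing along $\phi^*\colon\Br_1(X)\to\Br_1(X_0)$, lies in $X_0({\mathbf A}_k)^{\phi^*\Br_1(X)}$. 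Using that $\phi$ is birational so that $\Br_1(X_0)$ and $\phi^*\Br_1(X)$ agree on the relevant subquotient, strong approximation for $X_0$ produces a rational point of $X_0(k)$ whose $\phi$-image lies in $X(k)$ and approximates the original adelic point.

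The main obstacle is this transfer step: one must justify both (i) that the perturbation at the ``bad'' places where $p_i(t_v)=0$ for some $i\in I$ can be performed so as to simultaneously stay on the smooth locus of $X$, maintain adelic closeness, and preserve the Brauer--Manin conditions, and (ii) that the identification of the relevant subgroup of $\Br_1(X_0)$ with $\phi^*\Br_1(X)$ is sharp enough that strong approximation on $X_0$ really translates to strong approximation on $X$.
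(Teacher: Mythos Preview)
Your setup is correct — same fibration $f$, same torus $T=\Res^1_{L/k}(\BG_m)$, same appeal to Theorem~\ref{main-1}. The detour through the auxiliary variety $X_0$ and the birational transfer, however, is both unnecessary and genuinely problematic: the obstacle you flag as (ii) is real, since there is no reason the adelic point lifted along $\phi^{-1}$ should be orthogonal to all of $\Br_1(X_0)$ rather than merely to $\phi^*\Br_1(X)$, and strong approximation for $X_0$ has only been established with respect to the former.

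The paper avoids this entirely by an observation you nearly made yourself. \emph{Both} hypotheses force $p_i(t)^{e_i}\in N_{L/k}(L[t])$: for $[L:k]\mid e_i$ this is your own identity $p_i(t)^{e_i}=N_{L/k}\bigl(p_i(t)^{e_i/[L:k]}\bigr)$; for $L\hookrightarrow k_i$, taking $p_i$ monic with root $\alpha_i\in k_i$, one has $p_i(t)=N_{k_i/k}(t-\alpha_i)=N_{L/k}\bigl(N_{k_i/L}(t-\alpha_i)\bigr)$. Hence $\prod_i p_i(t)^{e_i}=N_{L/k}(h(t))$ for a single $h\in L[t]$. Now $X(\mathbf A_k)^{\Br_1(X)}\neq\emptyset$ forces $c\in N_{L/k}(L^\times)$ (the paper invokes \cite[Theorem~5.1]{S}), say $c=N_{L/k}(\gamma)$, and $t\mapsto(\gamma\,h(t),t)$ is a \emph{section} of $f$ over $k$. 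Theorem~\ref{main-1} then applies directly to $X$ via its ``$f$ has a section'' alternative — no auxiliary variety, no transfer, no comparison of Brauer groups. The section also tightens the archimedean step, which in your proposal is asserted but not argued: once $f$ has a section one gets $f(X(k_v))=k_v$ for every real $v$, and a sign analysis of $c\prod_i p_i(t)^{e_i}$ (listing the real roots with odd exponent) shows that distinct connected components of $X(k_v)$ have disjoint $f$-images, from which the required unboundedness follows.
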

\begin{proof}  Consider the following homomorphism of tori
$$\phi: \  \prod_{i=1}^m \Res_{L_i/k} (\Bbb G_m) \rightarrow \Bbb G_m ; \ \ (x_1, \cdots, x_m) \mapsto \prod_{i=1}^m N_{L_i/k} (x_i) $$
and $T=\ker \phi$. Then the surjective morphism
$$ f:  X\rightarrow \Bbb A_k^1;  \ \ (x,t) \mapsto t $$  with the action of $T$ $$T \times X\rightarrow X; \ (\alpha, (x,t))\mapsto (\alpha \cdot x, t) . $$ 
For each $1\leq j \leq n$, there is $L_i$ such that  either $L_i$ can be imbedded in $k[t]/(p_j(t))$ over $k$ or $[L_i: k] \mid e_j$ with $1\leq i\leq m$. This implies that each $p_j(t)^{e_j}$ 
can be written as a norm from $L_i$ to $k$ with the variable $t$ for $1\leq j\leq n$. Since $$X(\mathbf A_k)^{\Br_1(X)}\neq \emptyset \ \ \ \Rightarrow \ \ \ c\in N_{L/k}(L^\times) $$ by \cite[Theorem 5.1]{S}, one concludes that $f$ has a section over $k$. 

If there are a real prime $v\in \infty_k$ and $L_i$ with $1\leq i\leq m$ such that $L_i$ has a real prime above $v$, then $X(k_v)$ is connected by inspecting the equation of $X$. Therefore one only needs to consider that $k$ is totally real and all primes of $L_i$ above $\infty_k$ are complex for all $1\leq i\leq m$. In this case, one can list all real roots of $p_i(t)^{e_i}$ with the odd integers $e_i$ 
$$-\infty< \alpha_1<\alpha_2 < \cdots < \alpha_s< \infty .$$ Since the sign of  $\prod_{i=1}^n p_i(t)^{e_i}$ will change when $t$ crosses such a root, one can concludes that $$f(E_v)\cap f(E_v')=\emptyset$$ where $E_v$ and $E_v'$ are two different connected components of $X(k_v)$ for $v\in \infty_k$. On the other hand, the morphism $f$ has a section. This implies that $X(k_v)$ is connected and $f(X(k_v))= k_v$. 
\end{proof}



\begin{exam} Let $a\in k^\times\setminus (k^\times)^2$. For any positive integer $d$, there is an irreducible polynomial $f(t)$ over $k$ with $deg(f)= 2d$ such that the affine Ch\^{a}telet surface
$$  x^2-a y^2 =f(t)$$ satisfies strong approximation off $\infty_k$. 
\end{exam}
\begin{proof}  Let $L=k(\sqrt{a})$ and $\Bbb Q^{ab}$ be the maximal abelian extension of $\Bbb Q$. Since $$\Gal(\Bbb Q^{ab}/\Bbb Q)=\prod_{p \ \text{prime}} \Bbb Z_p^\times , $$
one concludes that $\Gal(k\cdot \Bbb Q^{ab}/k)$ is an open subgroup of $\Gal(\Bbb Q^{ab}/\Bbb Q)$ with a finite index. This implies that there is a Galois extension $K/k$ containing $L$ with $[K:k]=2d$. Let $K=k(\theta)$ and $f(x)$ be an irreducible monic polynomial of $\theta$ over $k$. Write $X$ a variety defined by the equation $$ x^2-a y^2 =f(t) . $$
Since $\bar{k}[X]^\times=\bar k^\times$  by \cite[Proposition 2]{DSW},  one has
$$\Br_a(X)= H^1(k, \Pic(X_{\bar k}))$$ by \cite[Lemma 2.1]{CTX09} and  \cite[Chapter 1, Corollary 4.21]{Mi06}. Let $U$ be an open subset of $X$ defined by $f(t)\neq 0$. Then
 $\Pic(U_{\bar k})=1$. Moreover, one has the short exact sequence
$$ 1\rightarrow \bar k [U]^\times/\bar k^\times \xrightarrow{div} \Div_{X_{\bar k}\setminus U_{\bar k}}(X_{\bar k}) \rightarrow \Pic(X_{\bar k}) \rightarrow 1 .$$
This implies that $\Pic(X_{\bar k})$ is a free abelian group of rank $2d-1$. By the inflation-restriction sequence,  one further has $$\Br_a(X)= H^1(k, \Pic(X_{\bar k}))= H^1(K/k, \Pic(X_K)) . $$
Since $\Pic(U_K)=1$, one has the short exact sequence 
$$ 1\rightarrow K[U]^\times/K^\times \rightarrow \Div_{X_K\setminus U_K}(X_K) \rightarrow \Pic(X_K) \rightarrow 1 .$$ This implies the exact sequence 
$$ H^1(K/k, \Div_{X_K\setminus U_K}(X_K)) \rightarrow H^1(K/k, \Pic(X_K) )\rightarrow H^2(K/k,  K[U]^\times/K^\times) $$ by Galois cohomology. Since $\Div_{X_K\setminus U_K}(X_K)$ is a permutation $\Gal(K/k)$-module, one has $$H^1(K/k, \Div_{X_K\setminus U_K}(X_K))=0$$ by Shapiro's lemma (see \cite[(1.6.3) Proposition]{NSW}). Since 
$$K[U]^\times/K^\times\cong \Bbb Z (x-\sqrt{a} y) \oplus (\bigoplus_{\sigma\in \Gal(K/k)} \Bbb Z(t-\sigma\theta)) ,$$
one concludes $H^1(K/L, K[U]^\times/K^\times)= 0$. Therefore one obtains the following the inflation-restriction sequence
$$ 1\rightarrow H^2(L/k, (K[U]^\times/K^\times)^{\Gal(K/L)}) \rightarrow H^2(K/k,  K[U]^\times/K^\times)  \rightarrow H^2(K/L,  K[U]^\times/K^\times)^{\Gal(L/k)} .$$
Since $$H^2(K/L,  K[U]^\times/K^\times)=H^2(K/L,  \Bbb Z (x-\sqrt{a} y))=H^1(K/L, \Bbb Q/\Bbb Z (x-\sqrt{a}y)) $$ by Shapiro's lemma, one has 
$$H^2(K/L,  K[U]^\times/K^\times)^{\Gal(L/k)} = \Hom(\Gal(K/L),\Bbb Q/\Bbb Z (x-\sqrt{a}y))^{\Gal(L/k)} =0 .$$
Note
$$(K[U]^\times/K^\times)^{\Gal(K/L)}\cong \Bbb Z (x-\sqrt{a} y) \oplus \Bbb Z f_1 (x) \oplus \Bbb Z f_2 (x) $$ where $$f_1(t) =\prod_{\sigma\in \Gal(K/L)} (t-\sigma \theta) \ \ \ \text{ and } \ \ \ f_2(t) =\prod_{\sigma\in \Gal(K/k)\setminus \Gal(K/L)} (t-\sigma \theta) $$ satisfying $\tau f_1(t)=f_2(t)$ for the non-trivial element $\tau\in \Gal(L/k)$. Since 
$$\tau (x-\sqrt{a} y)= f_1(t)\cdot f_2(t) \cdot (x-\sqrt{a} y)^{-1} , $$
one concludes that 
$$ H^2(L/k, (K[U]^\times/K^\times)^{\Gal(K/L)}) =\hat{H}^0(L/k, (K[U]^\times/K^\times)^{\Gal(K/L)}) =0 .$$ Therefore $\Br_a(X)=0$. Since $f(t)$ is a monic polynomial, each connected component of $X(k_\infty)$ is not compact. The result follows from  Corollary \ref{app-neq}. 
\end{proof}

The local-global principle for integral points of Diophantine equations in \cite{Wat} can be reduced to study strong approximation of 
\begin{equation}\label{watson}  q(x_1, \cdots, x_n) = p(t)  \end{equation} where $q(x_1, \cdots, x_n)$ is a non-degenerated quadratic form and $p(t)$ is a non-zero polynomial over a number field $k$. 
Indeed, strong approximation has been established in \cite{CTX13} and \cite{Xu} for $n\geq 3$. The remaining $n=2$ case is an affine Ch\^{a}telet surface and \cite[Theorem 4]{Wat} can be recovered from the following example when the equation (\ref{watson}) is smooth. 

\begin{exam} Let $X$ be the smooth variety defined by (\ref{watson}) with $n=2$ over $k$. If $q(x_1, x_2)=0$ has a non-trivial solution over $k$ and $p(t)\equiv 0 \mod v$ is soluble for almost all primes $v$ of $k$, then $X$ satisfies strong approximation off $\infty_k$. 
\end{exam}
\begin{proof} Since $q(x_1, x_2)=0$ has a non-trivial solution over $k$, one can assume that $$q(x_1, x_2)=x_1x_2 . $$ 
Applying Corollary \ref{app-neq} for $m=2$ and $L_1=L_2=k$, one concludes that $X$ satisfies strong approximation off $\infty_k$ with respect to $\Br_1(X)$. 

Let $U$ be an open subset of $X$ defined by $x_1\neq 0$. Then $U\cong \Bbb G_m \times_k \Bbb A_k^1$ over $k$. This implies that $\bar k[U]^\times/\bar k^\times$ is free of rank 1 with a generator $x_1$. Since $p(t)\equiv 0 \mod v$ is soluble for almost all primes $v$ of $k$, then $p(t)$ is not a constant. Therefore $x_1\not \in \bar k[X]^\times$ and  $\bar k[X]^\times = \bar k^\times$.  One concludes that 
\begin{equation} \label{bp} \Br_a(X)= H^1(k, \Pic(X_{\bar k})) \end{equation} by \cite[Lemma 2.1]{CTX09} and  \cite[Chapter 1, Corollary 4.21]{Mi06} and the short exact sequence
\begin{equation}\label{ses} 1\rightarrow \bar k [U]^\times/\bar k^\times \xrightarrow{div} \Div_{X_{\bar k}\setminus U_{\bar k}}(X_{\bar k}) \rightarrow \Pic(X_{\bar k}) \rightarrow 1 \end{equation} of $\Gal(\bar k/k)$-modules. 
Since $X$ is smooth, one can write $$ p(t)=c\prod_{j=1}^n p_j(t) \ \ \ \text{and} \ \ \ K_j= k[t]/(p_j(t))$$ with $c\in k^\times$ and $1\leq j\leq n$,  
where $p_j(t)$'s are distinct irreducible monic polynomials over $k$. Then 
$$\Div_{X_{\bar k}\setminus U_{\bar k}}(X_{\bar k})\cong \bigoplus_{j=1}^n \Ind_{\Gal(\bar k/k)}^{\Gal(\bar k/K_j)} \Bbb Z $$ as $\Gal(\bar k/k)$-modules.
Since  $$H^1(k, \Div_{X_{\bar k}\setminus U_{\bar k}}(X_{\bar k}))=0 \ \ \ \text{and} \ \ \ H^2(k, \Div_{X_{\bar k}\setminus U_{\bar k}}(X_{\bar k}))=\bigoplus_{j=1}^n H^2(K_j, \Bbb Z)  $$  by Shapiro's lemma (see \cite[(1.6.3) Proposition]{NSW}) and 
$$ 0=H^1(k, \Div_{X_{\bar k}\setminus U_{\bar k}}(X_{\bar k})) \rightarrow H^1(k,  \Pic(X_{\bar k})) \rightarrow H^2(k, \bar k [U]^\times/\bar k^\times) \rightarrow H^2(k, \Div_{X_{\bar k}\setminus U_{\bar k}}(X_{\bar k}))  $$ by applying Galois cohomology to (\ref{ses}), one obtains that 
$$ \Br_a(X) \cong \ker (H^1(k, \Bbb Q/\Bbb Z) \xrightarrow{res} \bigoplus_{j=1}^n H^1(K_j, \Bbb Q/\Bbb Z)) $$ by (\ref{bp}) and the identifications $$H^2(k, \Bbb Z)\cong H^1(k, \Bbb Q/\Bbb Z) \ \ \ \text{ and } \ \ \ H^2(K_j, \Bbb Z)\cong H^1(K_j, \Bbb Q/\Bbb Z)$$ with $1\leq j\leq n$. 

Suppose $\Br_a(X)$ is not trivial. There is 
\begin{equation} \label{kh} \psi \in \ker [\Hom_{cts}(\Gal(\bar k/k), \Bbb Q/\Bbb Z) \xrightarrow{res} \bigoplus_{j=1}^n \Hom_{cts}(\Gal(\bar k/K_j), \Bbb Q/\Bbb Z)] \end{equation} such that $\ker(\psi)= \Gal(\bar k/L)$ where $L/k$ is a finite abelian extension with $\sigma\in \Gal(L/k)$ and $\psi(\sigma)\neq 0$. By the Chebotarev density theorem, there are infinitely many primes $\frak p$ of $k$ such that $Frob_{L/k}(\frak p)=\sigma$. Since $p(t)\equiv 0 \mod v$ is soluble for almost all primes $v$ of $k$, there is a prime $v_0$ of $k$ such that $$Frob_{L/k}(v_0)=\sigma \ \ \ \text{ and } \ \ \ (K_j)_{w}= k_{v_0}$$ for some $1\leq j\leq n$ and some prime $w$ of $K_j$ above $v_0$. This implies that $$Frob_{LK_j/K_j}(w)|_{L/k} = \sigma  \ \ \ \text{and} \ \ \ \psi(\sigma)=0 $$ by (\ref{kh}), which contradicts the choice of $\psi$. Therefore $\Br_a(X)$ is trivial and $X$ satisfies strong approximation off $\infty_k$. 
\end{proof}

It should be pointed out that the Brauer groups of certain affine Ch\^{a}telet surfaces are computed with some algorithm for Brauer-Manin set in \cite{berg}.

\section{Application of Schinzel hypothesis to non-split fibrations}

We keep the same notations as those in the previous section if not explained.  More precisely,  we assume that  $X$ is a smooth and geometrically integral variety  over a number field $k$. Let $X\xrightarrow{f} \Bbb A_k^1$ be a surjective morphism over $k$ with geometrically integral generic fiber. The significant difference is that we do not assume that $f$ is split in this section. Then the map on the adelic points $X(\mathbf A_k) \rightarrow \mathbf A_k$ induced by $f$ is not necessarily open. This prevents us from applying strong approximation for $\Bbb A_k^1$ to find a rational point over $\Bbb A_k^1$. 

\begin{exam} Let $X$ be a smooth and geometrically integral variety defined by $x^2+y^2=t$ over $\Bbb Q$ and consider the fibration $X\xrightarrow{f} \Bbb A_{\Bbb Q}^1$ by sending $(x,y,t)\mapsto t$. For the prime $p\equiv 3 \mod 4$ and $\ord_p(t) \equiv 1\mod 2$, the equation $x^2+y^2=t$ has no solution over $\Bbb Z_p$.  There are infinitely many primes $p$ such that the map $f: \mathcal X(\Bbb Z_p) \rightarrow \Bbb Z_p$ is not surjective. Therefore the map $X(\mathbf A_\Bbb Q) \rightarrow \mathbf A_\Bbb Q$ induced by $f$ is not open.
\end{exam}

To overcome (D2) in \S 1, one can apply Schinzel's hypothesis. The hypothesis ($H_1$) in \cite[pp. 71]{CTSD}  is a consequence of Schinzel's hypothesis 
which is due to Serre by \cite[Prop. 4.1]{CTSD}. One can further refine the hypothesis ($H_1$) to the following hypothesis with the signs.  

\begin{hypo}[$H_s$] \label{schinzel} Let $p_1(t), \cdots, p_n(t)$ be irreducible polynomials over a number field $k$ and $\epsilon_v\in \{\pm 1\}$ for each real prime $v\in \infty_k$. Then there is a finite subset $S$ of $\Omega_k$ containing $\infty_k$ satisfying the following property:

For any $\lambda_v \in k_v$ with $v\in S\setminus \infty_k$, there is $\lambda\in \frak o_{k,S}$ close to $\lambda_v$ for $v\in S\setminus \infty_k$ with arbitrarily large $\epsilon_v \lambda$ at all real primes $v$ of $k$ such that for each $1\leq i\leq n$, $p_i(\lambda)$ is a unit of $k_w$ for all primes $w\not\in S$ except one prime $w_i$, where $p_i(\lambda)$ is a uniformizing parameter.




\end{hypo}

\begin{prop}\label{hs} Schinzel hypothesis over $\Bbb Q$ implies ($H_s$) over a number field $k$. 

\end{prop}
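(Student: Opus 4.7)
The plan is a classical Serre-type reduction to Schinzel's hypothesis over $\BQ$, in the spirit of \cite[Proposition~4.1]{CTSD}, with extra bookkeeping for the sign data $(\epsilon_v)_{v\in\infty_k}$. Let $S_0$ denote the finite set of rational primes below $S$, which I allow myself to enlarge finitely often. I would reduce the search for $\lambda$ to the choice of a single integer parameter as follows. Choose $\omega \in \frak o_k$ with $\epsilon_v \sigma_v(\omega) > 0$ at every real place $v\in\infty_k$, where $\sigma_v\colon k\hookrightarrow\BR$ is the corresponding real embedding; such $\omega$ exists because $\frak o_k$ embeds as a full lattice in $k_\infty$ and the prescribed sign orthant is open. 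Multiply $\omega$ by a large positive integer $M$ so that $M\omega$ lies in a high power of the maximal ideal at every finite $v\in S$. Pick $\mu\in\frak o_{k,S}$ close to the prescribed $\lambda_v$ at each $v\in S\setminus\infty_k$. Then for every $t\in\BZ$, the element $\lambda := \mu + tM\omega \in \frak o_{k,S}$ is close to $\lambda_v$ at each $v\in S\setminus\infty_k$, and $\epsilon_v\lambda\to +\infty$ at each real $v$ as $t\to+\infty$.

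I would then push the condition on $p_i(\lambda)$ down to $\BQ$ via the norm. Form
\[
P_i(T) := N_{k/\BQ}\bigl(p_i(\mu + TM\omega)\bigr)\in\BZ[T]
\]
(enlarging $S_0$ to absorb any denominators coming from the coefficients of the $p_i$). Since $p_i$ is irreducible over $k$ and $M\omega\neq 0$, the polynomial $p_i(\mu + TM\omega)$ is irreducible in $k[T]$, and hence $P_i$ is a power of an irreducible polynomial in $\BQ[T]$. To promote this to genuine irreducibility, note that, writing the roots of $P_i$ in $\overline{\BQ}$ as $\beta_{i,j} = (\alpha_{i,j}-\mu)/(M\omega)$ with $\alpha_{i,j}$ the roots of $p_i$, irreducibility of $P_i$ over $\BQ$ is equivalent to $\BQ(\beta_{i,j})=k(\alpha_{i,j})$, a condition cut out by a thin subset of admissible $\omega$. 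Applying Hilbert irreducibility to the affine sublattice of sign-prescribed $\omega$ with the required $S$-adic congruence, one finds such an $\omega$ outside this thin set. One further enlargement of $S_0$ eliminates any fixed prime divisor of the family $\{P_1,\dots,P_n\}$.

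Now I would apply Schinzel's hypothesis over $\BQ$ to $\{P_1,\dots,P_n\}$ with the archimedean sign constraint $T\to+\infty$ and congruence conditions modulo an appropriate integer encoding the approximation at $S_0$. The output is arbitrarily large positive $T\in\BZ$ with $|P_i(T)|=q_i$ a rational prime outside $S_0$ for every $i$. Setting $\lambda = \mu + TM\omega$, the identity
\[
\bigl|N_{k/\BQ}\bigl(p_i(\lambda)\bigr)\bigr| \;=\; \prod_{w\mid q_i} q_i^{\,f(w/q_i)\,\ord_w(p_i(\lambda))}
\]
combined with $|N_{k/\BQ}(p_i(\lambda))|=q_i$ forces a unique prime $w_i$ of $k$ above $q_i$ with $\ord_{w_i}(p_i(\lambda))=1$ and $f(w_i/q_i)=1$, while $p_i(\lambda)$ is a unit at every other prime outside $S$. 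Together with the sign and approximation data already built into $\lambda$, this is exactly $(H_s)$.

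The step I expect to be the main obstacle is the simultaneous irreducibility check on the $P_i$: making sure all of them are irreducible over $\BQ$ without destroying the sign and $S$-adic approximation constraints on $\omega$ and $\mu$. The cleanest way I see is to fix $S$ and $\mu$ first and choose $\omega$ last, applying Hilbert irreducibility to a finite union of thin sets inside the lattice of admissible $\omega$; since a thin set over the Hilbertian field $\BQ$ does not cover a full $\BZ$-coset, a suitable $\omega$ exists.
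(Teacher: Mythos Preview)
Your argument is essentially correct, but it takes a much longer route than the paper's.  The paper does not redo the Serre reduction to~$\BQ$ at all: it treats the sign-free hypothesis~$(H_1)$ of \cite[p.~71]{CTSD} (already deduced from Schinzel over~$\BQ$ in \cite[Prop.~4.1]{CTSD}) as a black box over the number field~$k$, and then performs a single linear substitution.  Concretely, one picks $a\in k$ with $\epsilon_v a>0$ at every real place, sets $P_i(t)=p_i(at)$, applies~$(H_1)$ to $P_1,\dots,P_n$ to obtain~$S$ (enlarged so that $a\in\frak o_{k,S}^\times$), and then, given $(\lambda_v)_{v\in S\setminus\infty_k}$, feeds $(a^{-1}\lambda_v)$ into~$(H_1)$ to get $t_0\in\frak o_{k,S}$ close to $a^{-1}\lambda_v$ and arbitrarily large at every real place.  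The element $\lambda=at_0$ then satisfies~$(H_s)$, since $p_i(\lambda)=P_i(t_0)$ and $\epsilon_v\lambda=(\epsilon_v a)\,t_0$ with $\epsilon_v a>0$.  The whole proof is five lines.

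By contrast you unfold \cite[Prop.~4.1]{CTSD} and rebuild it with the sign data carried by the auxiliary element~$\omega$.  That works, but you should be careful about the order of quantifiers: in~$(H_s)$ the set~$S$ must be fixed \emph{before} the~$\lambda_v$ are given, whereas in your write-up the polynomials $P_i(T)=N_{k/\BQ}(p_i(\mu+TM\omega))$ depend on~$\mu$ (hence on~$\lambda_v$) and you then ``enlarge~$S_0$'' to kill their fixed prime divisors.  This is not fatal --- the fixed prime divisors of such a norm form are bounded by $[k:\BQ]\cdot\max_i\deg p_i$ together with the primes where $\omega$ is a non-unit, so the enlargement can be made once and for all, independently of~$\mu$ --- but it should be said explicitly.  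The paper's substitution trick sidesteps this bookkeeping entirely: the sign adjustment is decoupled from the hard analytic input, which is simply quoted.
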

\begin{proof} Fix $a\in k$ such that $\epsilon_v a>0$ for all real primes $v\in \infty_k$ and write $P_i(t)=p_i(at)$ with $1\leq i\leq n$. Let $S$ be a finite set of primes of $k$ satisfying \cite[\bf{Hypothesis ($H_1$)}]{CTSD} for $P_1(t), \cdots, P_n(t)$ and $a\in \frak o_{k,S}$. For any given $\lambda_v \in k_v$ with $v\in S\setminus \infty_k$, there is $t_0\in \frak o_{k, S}$ close to $a^{-1}\lambda_v$ for $v\in S\setminus \infty_k$ and arbitrarily large at all real primes $v\in \infty_k$ such that for each $1\leq i\leq n$, $P_i(t_0)$ is a unit of $k_w$ for all primes $w\not\in S$ except one prime $w_i$, where $P_i(t_0)$ is a uniformizing parameter by \cite[Prop.4.1]{CTSD}. Then one can choose $\lambda = at_0$ as desired. 
\end{proof}

Let $U$ be an open dense subset of $\Bbb A_k^1$ over $k$ such that $f|_V: V=f^{-1}(U) \rightarrow U$ is smooth with geometrically integral fibers.  Write $$\Bbb A_k^1\setminus U=\{P_1, \cdots, P_n \}$$ where $P_1, \cdots, P_n$ are the closed points over $k$ and $k_i=k(P_i)$ are the residue fields of $P_i$ for $1\leq i\leq n$. Let $D_i=f^{-1}(P_i)$ and $\{D_{i,j}\}_{j=1}^{g_i}$ be the set of irreducible components of $D_i$ for $1\leq i\leq n$. For any  $b\in \Br_1 (V)$,  one has $$\partial_{D_{i,j}}(b) \in H^1(L_{i, j}, \Bbb Q/\Bbb Z)$$ where  $L_{i, j}$ is the algebraic closure of $k_i$ inside the function field $k_i(D_{i, j})$ for $1\leq j\leq g_i$ with $1\leq i\leq n$.

The following result is an analogue of Proposition \ref{fib-bm} without assuming that all fibers of $f$ are split under Schinzel's hypothesis. 

\begin{prop} \label{sc-bm}  Let $B$ is a finite subgroup of $\Br_1(V)$. Suppose that $E=\prod_{v\in \infty_k} E_v$ is a connected component of $X(k_\infty)$ such that $f(E_v)$'s are unbounded for all real primes $v$ of $k$. Assume that

(i)  Each $D_{i}$ contains an irreducible component $D_{i,1}$ of multiplicity 1 such that the algebraic closure $L_i$ of $k_i$ inside $k_i(D_{i,1})$ is cyclic over $k_i$ with $1\leq i\leq n$.

(ii)  Hypothesis ($H_s$) holds. 

Let $M_i/L_{i}$ be a finite abelian extension such that  $\partial_{D_{i,1}}(B)|_{M_i} =0$ and $M_i/k_i$ is Galois for $1\leq i\leq n$. Let $B_1$ be a finite subgroup of $\Br_1(V)$ such that the image of $B_1$ in $\Br_a(V)$ contains $$f^*(\partial_U^{-1}(\bigoplus_{i=1}^nH^1(M_i/k_i, \BQ/\BZ)))$$ by the commutative diagram (\ref{residue})

Then 
$$ \bigcup_{c\in U(k)} X_c(\mathbf A_k)_{E_c}^{B+B_1} \ \ \ \text{is dense in} \ \ \ X(\mathbf A_k)_E^{(B+B_1)\cap \Br_1(X)} $$ where $X_c$ is the fiber of $f$ over $c\in U(k)$ and $E_c=E\cap X_c(k_\infty)$. 
\end{prop}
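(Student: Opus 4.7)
The plan is to mirror the architecture of Proposition \ref{fib-bm}, substituting Hypothesis $(H_s)$ for the openness of $f\colon X(\mathbf{A}_k) \to \mathbf{A}_k$ that was automatic in the split case, and using the cyclicity of $L_i/k_i$ together with global reciprocity to manufacture local points at the non-split fibres. Starting from an open set $W = E \times \prod_{v<\infty_k} W_v$ with $W^{(B+B_1)\cap \Br_1(X)} \neq \emptyset$, one enlarges $S \supseteq \infty_k$ so that integral models of $X \to \mathbb{A}^1_k$, of $V \to U$, of each $D_{i,1}$ and of the tower $M_i/L_i/k_i$ are defined and unramified over $\frak o_{k,S}$, with $\mathcal{D}_{i,1}^{\mathrm{sm}}(k(w)) \neq \emptyset$ at residue fields of primes $w$ of $L_i$ above primes outside $S$, and with elements of $(B+B_1)\cap \Br_1(X)$ (resp.\ of $B+B_1$) taking trivial values on $\mathcal{X}(\frak o_{k_v})$ (resp.\ on $\mathcal{V}(\frak o_{k_v})$) for $v \notin S$, mirroring conditions (a)--(d) of Proposition \ref{fib-bm}. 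Harari's formal lemma then (after a further enlargement of $S$) yields $x_v \in W_v$ for $v \in S$ with $\sum_{v \in S} \xi(x_v) = 0$ for every $\xi \in B+B_1$, and shrinking $W_v$ makes each such $\xi$ take a single value on $W_v$.

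Next, I apply $(H_s)$ to $p_1, \ldots, p_n$ with targets $\lambda_v = f(x_v)$ at finite $v \in S$ and signs $\epsilon_v$ at real primes chosen so that $f(E_v)$ is unbounded in the $\epsilon_v$-direction. This produces $t_0 \in \frak o_{k, S}$ close to each $\lambda_v$, with $\epsilon_v t_0$ arbitrarily large at real primes, such that for each $1 \le i \le n$, $p_i(t_0)$ is a unit at every prime outside $S$ except at one prime $w_i$, where it is a uniformiser. At $v \in \infty_k$, the unboundedness of $f(E_v)$ and the sign-largeness of $\epsilon_v t_0$ give $y_v \in E_v \cap f^{-1}(t_0)$; at $v \in S \setminus \infty_k$, smoothness of $f|_V$ together with $t_0 \approx f(x_v)$ allow a Hensel lift $y_v \in W_v \cap X_{t_0}(k_v)$; at $v \notin S \cup \{w_1, \ldots, w_n\}$, all $p_j(t_0)$ are units and one extracts $y_v \in \mathcal{X}_{t_0}(\frak o_{k_v})$ by smoothness.

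The main obstacle is producing $y_{w_i} \in X_{t_0}(k_{w_i})$ at each non-split fibre. The reduction of $t_0$ gives a root of $p_i \bmod w_i$, and hence a prime $\mathfrak{P}_i$ of $k_i$ above $w_i$ with $(k_i)_{\mathfrak{P}_i} = k_{w_i}$; a Hensel lift from $\mathcal{D}_{i,1}^{\mathrm{sm}}$ exists precisely when the Frobenius $\sigma_i = \Frob_{\mathfrak{P}_i}$ in the \emph{cyclic} group $\Gal(L_i/k_i)$ is trivial. Here cyclicity enters crucially: I pick a faithful character $\chi_i \in H^1(L_i/k_i, \BQ/\BZ) \hookrightarrow H^1(M_i/k_i, \BQ/\BZ)$ and set $\alpha_i := f^*(\partial_U^{-1}(\chi_i))$, whose class in $\Br_a(V)$ lies in the image of $B_1$ by hypothesis. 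A tame-symbol and corestriction computation (compare Proposition \ref{fib-bm}) gives
$$\inv_{w_i}\bigl(\partial_U^{-1}(\chi_i)(t_0)\bigr) = \chi_i(\sigma_i),$$
while at $v \in S$ this invariant agrees (by continuity and the single-value condition on $W_v$) with $\inv_v(\alpha_i(x_v))$, and at primes outside $S \cup \{w_i\}$ it vanishes since $p_i(t_0)$ is a unit there. Global reciprocity for $\partial_U^{-1}(\chi_i)(t_0) \in \Br(k)$, combined with Harari's vanishing $\sum_{v \in S} \inv_v(\alpha_i(x_v)) = 0$ (modulo the $\Br(k)$-ambiguity between $\alpha_i$ and its lift in $B_1$, which disappears by another use of reciprocity), forces $\chi_i(\sigma_i) = 0$, so faithfulness of $\chi_i$ gives $\sigma_i = 1$. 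Thus $\mathfrak{P}_i$ splits completely in $L_i$ and $y_{w_i}$ exists by Hensel-lifting any point of $\mathcal{D}_{i,1}^{\mathrm{sm}}(k(w_i))$. The final verification $(y_v) \in X_{t_0}(\mathbf{A}_k)_{E_{t_0}}^{B+B_1}$ reproduces the residue/Frobenius calculation in the proof of Proposition \ref{fib-bm}: for $b \in B+B_1$ one writes $\sum_v b(y_v) = \sum_i \partial_{D_{i,1}}(b)(\tau_i)$ with $\tau_i \in \Gal(M_i/L_i)$ (now a well-defined element of the abelian group $\Gal(M_i/L_i)$, since $\mathfrak{P}_i$ splits in $L_i$), and reciprocity applied to $f^*(\partial_U^{-1}(\chi))$ as $\chi$ runs over $H^1(M_i/k_i, \BQ/\BZ)$ forces $(\tau_i)_{i=1}^n$ to vanish; the hypothesis $\partial_{D_{i,1}}(B)|_{M_i} = 0$ then yields $\sum_v b(y_v) = 0$ as required.
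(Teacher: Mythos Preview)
Your architecture is sound through the construction of $(y_v)_{v\in\Omega_k}$ and the splitting of $\mathfrak{P}_i$ in $L_i$, but the final step contains a genuine gap. You claim that reciprocity applied to $f^*(\partial_U^{-1}(\chi))$ for $\chi$ ranging over $\bigoplus_i H^1(M_i/k_i,\BQ/\BZ)$ forces $(\tau_i)_{i=1}^n$ to vanish. It does not: the group $\Gal(M_i/k_i)$ is in general \emph{non-abelian} (it is an extension of the cyclic $\Gal(L_i/k_i)$ by the abelian $\Gal(M_i/L_i)$, e.g.\ dihedral), so its characters only detect the abelianisation. Reciprocity therefore yields merely $\tau_i\in[\Gal(M_i/k_i),\Gal(M_i/k_i)]$, which is typically nontrivial, and $\sum_v b(y_v)$ need not vanish.

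The paper repairs this with an extra layer you have omitted entirely. \emph{Before} applying Harari's formal lemma, it uses the Chebotarev density theorem to pre-select, for each $i$, a finite set $T_i$ of degree-one primes of $L_i$ realising every element of $\Gal(M_i/L_i)$ as a Frobenius, and includes the underlying primes $S_i\subset\Omega_k$ in $S$; at each $\frak p\in S_i$ it fixes local points $x_{\frak p}$ lifting specified components of $\mathcal D_{i,1}$. After obtaining $\tau_i\in[\Gal(M_i/k_i),\Gal(M_i/k_i)]$, it invokes the cyclicity of $L_i/k_i$ a \emph{second} time to prove the key identity
\[
[\Gal(M_i/k_i),\Gal(M_i/k_i)]=\Bigl\{\textstyle\prod_{\gamma\in\Gal(L_i/k_i)}(\gamma\circ\sigma_\gamma)\sigma_\gamma^{-1}:\sigma_\gamma\in\Gal(M_i/L_i)\Bigr\},
\]
via the vanishing of $H^2(L_i/k_i,\BQ/\BZ)$. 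Writing $\tau_i$ in this form, it then \emph{modifies} the adelic point at the auxiliary primes in $S_i$: for each pair $(\gamma_i,\sigma_{\gamma_i})$ it replaces $y_{\frak p}$ (lifting $\mathcal D_{i,\frak p}^{\mathrm{sm}}$) by $y_{\frak p}'$ (lifting the Galois-twisted component $\gamma_i\mathcal D_{i,\frak p}^{\mathrm{sm}}$), so that the change in $\sum_v b(\cdot)$ is exactly $-\partial_{D_{i,1}}(b)(\tau_i)$. Your use of cyclicity only at the splitting step misses its decisive role in this commutator-correction argument.
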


\begin{proof} Write $\Bbb A_k^1=\Spec(k[t])$ and $P_i=(p_i(t))$ where $p_i(t)$'s are the fixed irreducible polynomials over $k$. Let $X'$ be an open subset of $X$ over $k$ such that the restriction $f|_{X'}: X'\rightarrow \Bbb A_k^1$ is smooth and $f|_{X'}^{-1}(P_i)$ contains the smooth part $D_{i,1}^{sm}$ of $D_{i,1}$ for $1\leq i\leq n$. 
 
For any open subset $$W=E\times \prod_{v<\infty_k} W_v \subset X(\mathbf A_k) \ \ \ \text{ with } \ \ \ W^{\Br_1(X) }\neq \emptyset , $$ there is a finite subset $S_0$ of $\Omega_k$ containing $\infty_k$ such that the following conditions hold.

(a)  The morphism $f: X\rightarrow \Bbb A_k^1$, the open immersions $U\hookrightarrow \Bbb A_k^1$,  $V\hookrightarrow X$ and $X'\hookrightarrow X$ are extended to their integral models over $\frak o_{k, S_0}$ with the following commutative diagram 
$$ \xymatrix{ \mathcal{V} \ar[d]_f  \ar@{^{(}->}[r]  & \mathcal{X} \ar[d]^{f}  \\
\mathcal U\ar@{^{(}->}[r] &  \Bbb A_{\frak o_{k,S_0}}^1 }   $$
such that $f(\mathcal V(\frak o_{k_v})) = \mathcal U(\frak o_{k_v}) $ for $v\not\in S_0$ and the restriction $f|_{\mathcal X'}: \mathcal X' \rightarrow  \Bbb A_{\frak o_{k,S_0}}^1$ is smooth.

(b) The morphism $D_i\rightarrow P_i$ is extended to their integral models $\mathcal D_i \rightarrow \mathcal P_i$ over $\frak o_{k,S_0}$ such that $$\mathcal P_i=\Spec(\frak o_{k,S_0}[t]/(p_i(t)))=\Spec (\frak o_{k_i, S_0} ) $$ is smooth over $\frak o_{k,S_0}$ and $\mathcal D_i=f^{-1}(\mathcal P_i)$ for $1\leq i\leq n$. Moreover, $$ \Bbb A_{\frak o_{k,S_0}}^1 \setminus \mathcal U=\{ \mathcal P_1, \cdots, \mathcal P_n\} \ \ \ \text{and} \ \ \ \mathcal{X} \setminus \mathcal V = \{ \mathcal D_1, \cdots, \mathcal D_n \}$$  

(c) The closed immersion $D_{i,1} \hookrightarrow D_i$ is extended to their models $\mathcal D_{i, 1}\hookrightarrow \mathcal D_i$ over $\frak o_{k, S_0}$ 
such that the smooth points over residue fields $\mathcal D^{sm}_{i, 1}(k(w))\neq \emptyset$ for all primes $w$ in $L_i$ with $(w\cap k)\not \in S_0$ for $1\leq i\leq n$.
All field extensions $M_i/L_i/k_i/k$ are unramified outside $S_0$ for $1\leq i\leq n$.   

(d) All elements in $(B+B_1 )\cap \Br(X)$ take the trivial value over $W_v=\mathcal X(\frak o_{k_v})$ and all elements in $B+B_1$ take the trivial value over $\mathcal V(\frak o_{k_v})$ for all $v\not\in S_0$. 
 
 By the Chebotarev density theorem, there is a finite set $T_i$ of primes of degree 1 over $k$ in $L_i$ with $|\Gal(M_i/k_i)|$ elements such that 

(1)   for each $\sigma \in \Gal(M_i/L_i)$, there are exactly $|\Gal(L_i/k_i)|$ primes $\frak q\in T_i$ with sufficiently large cardinalities of the residue fields and $Frob(\frak q) =\sigma$ for $1\leq i\leq n$;

(2)  the set $S_i =\{ \frak q \cap k: \ \frak q \in T_i\}$ has $|\Gal(M_i/k_i)|$ primes with $S\cap S_i=\emptyset$ for $1\leq i\leq n$ and $S_i \cap S_j=\emptyset $ for $i\neq j$.

 Since all primes in $T_i$ are of degree 1 over $k$,  one concludes that $(L_i)_\frak q=k_\frak p$ for each $\frak q\in T_i$ with $\frak p=\frak q\cap k\in S_i$ for $1\leq i\leq n$. Moreover, the scheme $\mathcal D_{i,1} \times_{\frak o_{k,S}} \frak o_{k_{\frak p}}$ splits into 
  geometrically irreducible components $\{ \sigma\mathcal D_{i, \frak p}: \sigma\in \Gal(L_i/k_i)\} $ over $\frak o_{k_{\frak p}}$ by (b) and (c), where $\mathcal D_{i, \frak p}$ is a fixed geometrically irreducible component of $\mathcal D_{i,1} \times_{\frak o_{k,S}} \frak o_{k_{\frak p}}$ for $\frak p\in S_i$ with $1\leq i\leq n$. 
 Since the polynomial $p_i(t)$ has a root over $\frak o_{k_{\frak p}}$ for each $\frak p\in S_i$ by (b) and (c), one obtains $t_{\frak p}\in \frak o_{k_{\frak p}}$ with $\ord_{\frak p}(p_i(t_{\frak p}))=1$ such that there is $x_\frak p\in \mathcal X'_{t_\frak p}(\frak o_{k_\frak p})\subset \mathcal X(\frak o_{k_\frak p})$ satisfying   $$ x_\frak p \mod \frak p \in   \mathcal D_{i, \frak p}^{sm}(k(\frak p)) \subset \mathcal X'_{t_\frak p}(k(\frak p)) $$ by (1), the Lang-Weil's estimation (see \cite[Theorem 7.7.1]{Po}), (a) and Hensel's lemma for $1\leq i\leq n$. 

For any real prime $v\in \infty_k$, there is a connected component $C_{v}$ of $V(k_{v})$ such that $f(E_{v}\cap C_{v})$ is unbounded.
We define
$$ W_v= \begin{cases} E_{v} \cap C_{v}  \ \ \ & \text{$v$ real prime in $\infty_k$} \\
E_v\cap V(k_v) \ \ \ & \text{$v$ complex prime in $\infty_k$.} \end{cases} $$

By Harari's formal lemma (see \cite[Theorem 1.4]{CT01}), there is a finite set $S$ of primes of $k$ containing $\cup_{i=0}^n S_i$  with
$$ \begin{cases}  x_\frak p  \in \mathcal X'_{t_\frak p}(\frak o_{k_\frak p}) \ \ \ & \text{as above for} \  \frak p \in \cup_{i=1}^n S_i \\
x_v \in W_v  \  \ \ & \text{for} \  v\in S\setminus(\cup_{i=1}^n S_i) \end{cases} $$
such that 
\begin{equation}\label{hara}  \sum_{v\in S} \xi(x_v)=0   \end{equation} 
for all $\xi\in (B+B_1)$. By shrinking $W_v$ for $v\in S\setminus \infty_k$, one can assume that each element in $B+B_1$ takes a single value over $W_v$ and $f(W_v)$ is open in $k_v$.

By (ii), there is $t_0\in \frak o_{k, S}$ close to $f(x_v)$ for $v\in S\setminus \infty_k$ and $t_0\in f(W_v)$ for $v\in \infty_k$ such that $\ord_{v}(p_i(t_0))=0$ for all $v\not\in S$ except one prime $v_i$ of $k$ with $\ord_{v_i}(p_i(t_0))=1$ for $1\leq i\leq n$. Write $$p_i(t)=\prod_{j=1}^g q_j(t)$$ where $q_j(t)$ are irreducible polynomials over $\frak o_{k_{v_i}}$ for $1\leq j\leq g$. There are $g$ primes $w_1, \cdots, w_g$ of $k_i$ above $v_i$ corresponding to irreducible polynomials $q_1(t), \cdots, q_g(t)$ respectively. Since $\ord_{v_i}(p_i(t_0))=1$, there is $1\leq j_0\leq g$ such that 
\begin{equation} \label{spr} \ord_{v_i}(q_{j_0}(t_0))=1 \ \ \ \text{and} \ \ \ \ord_{v_i}(q_j(t_0))=0 \ \ \ \text{for all $j\neq j_0$.} \end{equation}  
Then $q_{j_0}(t)$ is of degree 1 by (b) and Hensel's lemma and $w_{j_0}$ is a prime of degree 1 over $v_i$.

Let $\alpha_i$ be the image of $t$ in $k[t]/(p_i(t))=k_i$. Then $$Cores_{k_i/k} (\chi, t-\alpha_i) \in \Br_1(U)  \  \ \text{with} \ \ f^*(Cores_{k_i/k} (\chi, t-\alpha_i))\in B_1$$ for any $\chi\in H^1(L_i/k_i, \Bbb Q/\Bbb Z)$ by \cite[\S1.2]{CTSD}. Since  
$$ 0= \sum_{v\in \Omega_k} Cores_{k_i/k} (\chi, t_0-\alpha_i)_v = Cores_{k_i/k} (\chi,  t_0-\alpha_i )_{v_i} +\sum_{v\in S} Cores_{k_i/k} (\chi, t_0-\alpha_i)_v $$ by the reciprocity law, (b) and (c),  one concludes that
$$\sum_{j=1}^g (\chi, t_0-\alpha_i)_{w_j}=- \sum_{v\in S} Cores(\chi, t_v-\alpha_i)_v =-\sum_{v\in S} f^*(Cores_{k_i/k} (\chi, t-\alpha_i))(x_v) =0$$ by (\ref{hara}) and the functoriality of Brauer-Manin pairing. 
By (\ref{spr}), one obtains that $$(\chi, t_0-\alpha_i)_{w_{j_0}}=0 \ \ \text{ for all $\chi\in H^1(L_i/k_i, \Bbb Q/\Bbb Z)$. } $$  
Since $L_i/k_i$ is abelian, one concludes that $w_{j_0}$ splits completely in $L_i/k_i$. 
There is $$y_{v_i} \in \mathcal X_{t_0}'(\frak o_{k_{v_i}})\subset \mathcal X(\frak o_{k_{v_i}}) \ \ \text{ such that } \ \  
 y_{v_i} \mod v_i \in \mathcal D^{sm}_{i,v_i}(k(v_i)) \subset  \mathcal X_{t_0}'(k(v_i)) $$
by Hensel's lemma and (a) and (c) for $1\leq i\leq n$. 
One can extend these $y_{v_i}$ to $$(y_v)_{v\in \Omega_k} \in X_{t_0}(\mathbf A_k)\cap W $$ by taking any element in $W_v\cap X_{t_0}(k_v)$ for $v\in S$ and any element in $\mathcal X_{t_0}(\frak o_{k_v})$ for the rest of primes $v$ of $k$.
Then
$$\sum_{v\in \Omega_k} b(y_v) = \sum_{v\not\in S} b(y_v) = \sum_{i=1}^n  b(y_{v_i}) $$ by (\ref{hara}) and (d) for all $b\in (B+B_1)$. 
By \cite[Corollary 2.4.3]{Ha94} and the choice of $y_{v_i}$, there is $\tau_i\in \Gal(M_i/k_i)$ such that $$ b(y_{v_i}) =\partial_{D_{i, 1}}(b) (\tau_i) $$
for all $b\in B+B_1$. Therefore  
\begin{equation} \label{res-bm}\sum_{v\in \Omega_k} b(y_v) =  \sum_{i=1}^n \partial_{D_{i,1}} (b) (\tau_i) \in \BQ/\BZ \end{equation}  
with $(\tau_i)_{i=1}^n \in \bigoplus_{i=1}^n \Gal(M_i/k_i)$ for all $b\in (B+B_1)$. 
 Since 
$$ \sum_{v\in \Omega_k} f^*(\xi) (y_v) = \sum_{v\in \Omega_k} \xi (t_0) =0  $$  for all $\xi \in \partial_U^{-1} (\bigoplus_{i=1}^n H^1(M_i/k_i, \BQ/\BZ))$ by the functoriality of Brauer-Manin pairing and the reciprocity law, one obtains $$\chi ((\tau_i)_{i=1}^n)=0 \ \ \ \text{for all} \ \chi \in \bigoplus_{i=1}^n H^1(M_i/k_i, \BQ/\BZ)$$ by (\ref{residue}) and (\ref{res-bm}). 
This implies that $(\tau_i)_{i=1}^n \in \bigoplus_{i=1}^n [\Gal(M_i/k_i), \Gal(M_i/k_i)]$. Write  
$$ [\Gal(L_i/k_i), \Gal(M_i/L_i)]= \{ \prod_{\gamma \in \Gal(L_i/k_i)} (\gamma \circ \sigma_{\gamma} ) \sigma_{\gamma}^{-1} : \sigma_\gamma \in \Gal(M_i/L_i) \}  $$ for $1\leq i\leq n$.
Then $$[\Gal(L_i/k_i), \Gal(M_i/L_i)] \subseteq  [\Gal(M_i/k_i), \Gal(M_i/k_i)]\subseteq \Gal(M_i/L_i) .$$ 
Since $\Gal(L_i/k_i)$ is cyclic, one has $$H^2(L_i/k_i, \Bbb Q/\Bbb Z)=H^3(L_i/k_i, \Bbb Z)=H^1(L_i/k_i, \Bbb Z)=0 .$$
This implies the restriction map 
$$ H^1(M_i/k_i, \Bbb Q/\Bbb Z) \rightarrow H^1(M_i/L_i, \Bbb Q/\Bbb Z)^{\Gal(L_i/k_i)}  $$ is surjective for $1\leq i\leq n$. Therefore the natural map
$$ \Gal(M_i/L_i)/ [\Gal(L_i/k_i), \Gal(M_i/L_i)]  \rightarrow \Gal(M_i/k_i) /[\Gal(M_i/k_i), \Gal(M_i/k_i)] $$ is injective by the duality. One concludes that $$[\Gal(L_i/k_i), \Gal(M_i/L_i)] = [\Gal(M_i/k_i), \Gal(M_i/k_i)]$$ for $1\leq i\leq n$. Then 
\begin{equation} \label{kill}  \tau_i= \prod_{\gamma_i \in \Gal(L_i/k_i)} (\gamma_i \circ \sigma_{\gamma_i}) \sigma_{\gamma_i}^{-1}  \ \ \text{with} \ \ \sigma_{\gamma_i}\in \Gal(M_i/L_i)  \end{equation} 
for $1\leq i\leq n$.

For each pair $(\gamma_i, \sigma_{\gamma_i})\in \Gal(L_i/k_i) \times \Gal(M_i/L_i)$ in (\ref{kill}), there is $\frak q\in T_i$ such that $$\sigma_{\gamma_i}^{-1} = Frob(\frak q) \ \ \ \text{ and } \ \ \ (L_i)_\frak q = k_\frak p  \ \text{ with } \ \frak p=\frak q\cap k$$ for $1\leq i\leq n$. 
Since $\ord_\frak p(p_i(t_0))=1$,
there is $$y_{\frak p}' \in \mathcal X_{t_0}'(\frak o_{k_{\frak p}})\subset \mathcal X_{t_0}(\frak o_{k_{\frak p}}) \ \ \text{ such that } \ \  
 y_{\frak p}' \mod \frak p \in \gamma_i \mathcal D^{sm}_{i,\frak p}(k(\frak p)) \subset  \mathcal X_{t_0}'(k(\frak p)) $$
by Hensel's lemma and (a) and (c) for $1\leq i\leq n$. Define 
$$ z_v = \begin{cases} y_{\frak p}' \ \ \ & \text{ if $v=\frak p$ from a pair $(\gamma_i, \sigma_{\gamma_i})$ in (\ref{kill}) with $1\leq i\leq n$} \\
y_v \ \ \ & \text{otherwise.} \end{cases} $$ Then $(z_v)_{v\in \Omega_k} \in X_{t_0} (\mathbf A_k) \cap W$ by (d). For $\frak p$ from a pair $(\gamma_i, \sigma_{\gamma_i})$ from (\ref{kill}), one has  
\begin{equation} \label{twist-value} b(y_{\frak p}')=e(\frak p) \partial_{D_i} (b) (Frob(y_{\frak p}')) = \ord_{\frak p} (p_i(t_0)) \partial_{D_i}(b) (\gamma_i \circ \sigma_{\gamma_i}^{-1})= -\partial_{D_i}(b) (\gamma_i \circ \sigma_{\gamma_i} ) \end{equation}
and 
\begin{equation} \label{br-value}  b(y_{\frak p}) = e(\frak p) \partial_{D_i} (b) (Frob(y_{\frak p})) = \ord_{\frak p} (p_i(t_0)) \partial_{D_i}(b) (\sigma_{\gamma_i}^{-1} )=- \partial_{D_i}(b) (\sigma_{\gamma_i} ) \end{equation}
for all $b\in (B+B_1)$ by \cite[Corollary 2.4.3 and p.244-245]{Ha94} with $1\leq i\leq n$. Therefore 
 $$ \sum_{v\in \Omega_k} b(z_v) = \sum_{v\in \Omega_k} b(y_v) -\sum_{i=1}^n \sum_{\gamma_i\in \Gal(L_i/k_i)} b(y_{\frak p}) +\sum_{i=1}^n \sum_{\gamma_i\in \Gal(L_i/k_i)} b(y_{\frak p}') $$
$$ = \sum_{i=1}^n \partial_{D_i}(b) (\tau_i) +\sum_{i=1}^n \sum_{\gamma_i \in \Gamma_i} \partial_{D_i}(b) (\sigma_{\gamma_i} ) - \sum_{i=1}^n \sum_{\gamma_i\in \Gal(L_i/k_i)} \partial_{D_i}(b) (\gamma_i\circ \sigma_{\gamma_i} )=0$$ 
for all $b\in (B+B_1)$ by (\ref{kill}), (\ref{twist-value}) and (\ref{br-value}). This implies that 
$$ (z_v)_{v\in \Omega_k} \in (X_{t_0} (\mathbf A_k)_{E_c}^{B+B_1}\cap W ) \neq \emptyset $$ as desired. 
\end{proof}

\begin{rem} The assumption that $L_i/k_i$ is cyclic for $1\leq i\leq n$ is needed only for proving the equality  
$$ [\Gal(M_i/k_i), \Gal(M_i/k_i)]= \{ \prod_{\gamma \in \Gal(L_i/k_i)} (\gamma \circ \sigma_{\gamma} ) \sigma_{\gamma}^{-1} : \sigma_\gamma \in \Gal(M_i/L_i) \}  $$
in  the proof of Proposition \ref{sc-bm}. This equality is equivalent to that the inflation map $$H^2(L_i/k_i, \Bbb Q/\Bbb Z)\rightarrow H^2(M_i/k_i, \Bbb Q/\Bbb Z)$$ is injective. In this case, the assumption that $L_i/k_i$ is  abelian will be enough for Proposition \ref{sc-bm}. 
\end{rem}

The main result of this section is the following theorem.

\begin{thm} \label{main-schinzel} Let $X\xrightarrow{f} \BA_k^1$ be a surjective morphism over a number field $k$. Suppose that $X$ admits an action of a torus $T$ over $k$ such that $f^{-1}(U) \xrightarrow{f} U$ is a torsor under $T$ where $U$ is an open dense subset of $\Bbb A_k^1$ over $k$. 
Write $\Bbb A_k^1\setminus U=\{P_1, \cdots, P_n \}$ where $P_1, \cdots, P_n$ are the closed points over $k$ and $k_i=k(P_i)$ are the residue fields of $P_i$ for $1\leq i\leq n$.
Assume that 

i)  Each $f^{-1}(P_i)$ contains an irreducible component of multiplicity 1 such that the algebraic closure $L_i$ of $k_i$ inside the function field of this component is cyclic over $k_i$ with $1\leq i\leq n$.

ii)  Each equivalent class of admissible connected components of $X(k_\infty)$ contains a connected component $E=\prod_{v\in \infty_k} E_v$ such that all $f(E_v)$'s are unbounded over all real primes $v$.

iii) Hypothesis ($H_s$) holds.

 Then $X$ satisfies strong approximation off $\infty_k$ with respect to $\Br_a(X)$. 
\end{thm}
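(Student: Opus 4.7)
The plan is to mirror the proof of Theorem \ref{main-1}, replacing the split/section assumption on $f$ by Proposition \ref{sc-bm}; hypotheses (i) and (iii) are precisely what Proposition \ref{sc-bm} needs in order to run the fibration argument in the non-split setting.

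Starting from an arbitrary open set $X(k_\infty)\times\prod_{v<\infty_k}W_v$ meeting $X(\mathbf A_k)^{\Br_a(X)}$, hypothesis (ii) allows me to shrink $X(k_\infty)$ to an admissible connected component $E=\prod_{v\in\infty_k}E_v$ on which $f(E_v)$ is unbounded at every real place. Since $U\subset\mathbf A_k^1$ is open it is quasi-trivial, so $f^{-1}(U)\to U$ is a $T$-torsor and Proposition \ref{assum} yields a finite subgroup $B\subset\Br_a(f^{-1}(U))\subset\Br_1(V)$ with
\[
(E\times W)\cap X_c(\mathbf A_k)^{B}\neq\emptyset\ \Longleftrightarrow\ (E\times W)\cap X_c(\mathbf A_k)^{\Br_a(X_c)}\neq\emptyset
\]
for every $c\in U(k)$.

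Next, for each $1\le i\le n$ I fix the multiplicity-one irreducible component $D_{i,1}\subset f^{-1}(P_i)$ provided by (i) and choose a finite abelian extension $M_i/L_i$ killing $\partial_{D_{i,1}}(B)$; after replacing $M_i$ by its Galois closure over $k_i$ I may assume $M_i/k_i$ is Galois. The subgroup $f^*(\partial_U^{-1}(\bigoplus_i H^1(M_i/k_i,\mathbb Q/\mathbb Z)))$ of $\Br_a(V)$ is then finite and lifts to a finite $B_1\subset\Br_1(V)$. Proposition \ref{sc-bm}, whose cyclicity and $(H_s)$ requirements are exactly our (i) and (iii), applied to $B$, $B_1$, and $E$ then produces some $c\in U(k)$ together with a point $(y_v)\in (E\times W)\cap X_c(\mathbf A_k)_{E_c}^{B+B_1}$. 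Orthogonality to $B$ combined with the equivalence above gives $(E\times W)\cap X_c(\mathbf A_k)^{\Br_a(X_c)}\neq\emptyset$, so the $T$-torsor $X_c$ has a rational point by \cite[Corollary 8.7]{S}, and strong approximation off $\infty_k$ with Brauer-Manin obstruction for the torus $T$ (\cite[Theorem 3.19]{D11}) delivers a $k$-point of $X_c$ inside $E\times W$, as required.

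The substantial work is already absorbed in Proposition \ref{sc-bm}: the cyclicity of $L_i/k_i$ is used precisely so that the inflation $H^2(L_i/k_i,\mathbb Q/\mathbb Z)\to H^2(M_i/k_i,\mathbb Q/\mathbb Z)$ is injective, allowing the local residue contributions $(\tau_i)$ to be written as commutators $\prod_{\gamma}(\gamma\circ\sigma_\gamma)\sigma_\gamma^{-1}$ and cancelled by twisting the local points at auxiliary primes produced by Chebotarev. The remaining work for the theorem is the structural assembly of Propositions \ref{assum} and \ref{sc-bm} with the Colliot-Th\'el\`ene--Sansuc triviality criterion, and the main conceptual step is checking that the finite group $B$ used in Proposition \ref{assum} can be combined with the finite $B_1$ demanded by Proposition \ref{sc-bm} inside $\Br_1(V)$ without violating the given Brauer-Manin condition on $X$; this is ensured by the inclusion $X(\mathbf A_k)^{\Br_a(X)}\subset X(\mathbf A_k)^{(B+B_1)\cap\Br_1(X)}$.
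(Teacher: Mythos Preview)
Your proposal is correct and follows exactly the same approach as the paper: the paper's proof is a single line stating that one repeats the proof of Theorem \ref{main-1} verbatim, replacing Proposition \ref{fib-bm} by Proposition \ref{sc-bm}. Your write-up simply unpacks that one line, correctly identifying the roles of hypotheses (i)--(iii), the construction of $B$ and $B_1$, and the concluding appeal to \cite[Corollary 8.7]{S} and \cite[Theorem 3.19]{D11}.
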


\begin{proof} It follows from the same proof of Theorem \ref{main-1}  by replacing Proposition \ref{fib-bm} with Proposition \ref{sc-bm}. 
\end{proof}

It should be pointed out that weak approximation for varieties satisfying Theorem \ref{main-schinzel} is not known before (see \cite{Wei1}). 
Applying the above theorem to the norm equation in (\ref{n-eq}), one gets the following result which improves \cite[Theorem 2]{Gun} and \cite[Theorem 1.1]{MIT}. 

\begin{cor}\label{neq-sch} Let $X$ be the smooth part of the following equation
$$ \prod_{i=1}^m N_{L_i/k}(x_i) = c\prod_{j=1}^n p_j(t)^{e_j} \ \ \ \text{with} \ \ \ c\in k^\times $$ 
where $L_i/k$'s are finite extensions of number fields and $p_j(t)$'s are distinct irreducible monic polynomials over $k$ and $e_j$'s are positive integers. Assume that one of extensions $L_i/k$ for $1\leq i\leq m$ is cyclic.
Suppose there is a connected component $E=\prod_{v\in \infty_k} E_v$ in each equivalent class of admissible connected components of $X(k_\infty)$ such that the projection of $t$-coordinate of $E_v$ is unbounded for all real primes $v\in \infty_k$. If Schinzel's hypothesis holds, then $X$ satisfies strong approximation off $\infty_k$ with respect to $\Br_1(X)$.  
\end{cor}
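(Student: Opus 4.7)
The plan is to apply Theorem \ref{main-schinzel} to the projection $f: X \to \Bbb A_k^1$, $(x,t)\mapsto t$, with torus $T=\Res_{L/k}^1(\Bbb G_m)$ acting by $(\alpha,(x,t))\mapsto(\alpha x,t)$. Set $U=\Bbb A_k^1\setminus\{P_1,\ldots,P_n\}$ with $P_i=(p_i(t))$. For $c\in U(k)$, the fiber is $\{N_{L/k}(x)=c\prod_i p_i(c)^{e_i}\}$ with right-hand side in $k^\times$, so $f^{-1}(U)\to U$ is a torsor under $T$. Since $f$ extracts the $t$-coordinate, condition (ii) of Theorem \ref{main-schinzel} is exactly our unboundedness hypothesis, and condition (iii) follows from Schinzel's hypothesis via Proposition \ref{hs}.

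The substantive content is verifying condition (i). For each $i$, set $k_i=k[t]/(p_i(t))$ and decompose $L\otimes_k k_i=\prod_{\mathfrak P\mid P_i} L_{\mathfrak P}$ into residue fields at the primes of $L$ above $P_i$. Since $L/k$ is cyclic, each decomposition group $\Gal(L_{\mathfrak P}/k_i)$ embeds into $\Gal(L/k)$ via restriction, hence is cyclic. Writing an element of $L\otimes_k k_i$ as $(x_{\mathfrak P})_{\mathfrak P}$, for a chosen $\mathfrak P$ the locus $D_{i,\mathfrak P}=\{x_{\mathfrak P}=0\}$ (other coordinates free) is $k_i$-irreducible because the norm form of the field extension $L_{\mathfrak P}/k_i$ is $k_i$-irreducible, and a count of geometric components (a single Galois orbit of size $[L_{\mathfrak P}:k_i]$) identifies the algebraic closure of $k_i$ in the function field of $D_{i,\mathfrak P}$ with $L_{\mathfrak P}$, cyclic over $k_i$ as required.

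To show $D_{i,\mathfrak P}$ has multiplicity $1$ in $f^{-1}(P_i)$, I would pass to $\bar k$: writing $p_i(t)=\prod_s(t-\alpha_{i,s})$ and $N_{L/k}(x)=\prod_\tau x^{(\tau)}$ over $\bar k$, the variety $X_{\bar k}$ is cut out by
$$F=\prod_\tau x^{(\tau)}-c\prod_{i,s}(t-\alpha_{i,s})^{e_i}=0.$$
At the generic point of a $\bar k$-component $\{x^{(\tau_0)}=0,\,t=\alpha_{i,s}\}$ of $f^{-1}(\alpha_{i,s})$, only $x^{(\tau_0)}$ vanishes, so $\partial F/\partial x^{(\tau_0)}=\prod_{\tau\ne\tau_0}x^{(\tau)}\ne 0$. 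Thus $X$ is smooth there, $(t,x^{(\tau)})_{\tau\ne\tau_0}$ serve as local coordinates, and $t-\alpha_{i,s}$ is a uniformizer of the local ring at this component; consequently $\ord(p_i(t))=1$, giving multiplicity $1$ for the $\bar k$-component of $f^{-1}(P_i)$, which descends to multiplicity $1$ for $D_{i,\mathfrak P}$ over $k$.

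With conditions (i), (ii), (iii) all in place, Theorem \ref{main-schinzel} delivers strong approximation off $\infty_k$ with respect to $\Br_a(X)$, equivalent to $\Br_1(X)$ since the constant classes in $\Br(k)$ contribute trivially to the Brauer-Manin pairing by the reciprocity law. The chief subtlety in this plan is the multiplicity calculation when $e_i>1$: the coordinate $x^{(\tau_0)}$ then vanishes to order $e_i$ at the generic point of the component, and one must invoke the smoothness of $X$ at that generic point to conclude that the correct uniformizer is $t-\alpha_{i,s}$ rather than $x^{(\tau_0)}$.
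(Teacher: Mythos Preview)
Your approach is the same as the paper's: apply Theorem~\ref{main-schinzel} to the projection $(x,t)\mapsto t$ with the $\Res_{L/k}^1(\Bbb G_m)$-action. The paper's proof is a single line (``same proof as Corollary~\ref{app-neq}, replacing Theorem~\ref{main-1} by Theorem~\ref{main-schinzel}''), which in particular leaves the verification of condition~(i) entirely to the reader; you have supplied these details, and your multiplicity-$1$ computation over $\bar k$ and the identification of the residual extension with $L_{\mathfrak P}/k_i$ (cyclic as a subextension of the cyclic $L/k$) are correct.

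One notational slip to fix: the $k_i$-irreducible component of $f^{-1}(P_i)$ you want is $\{N_{L_{\mathfrak P}/k_i}(x_{\mathfrak P})=0\}$, not $\{x_{\mathfrak P}=0\}$. The latter imposes $[L_{\mathfrak P}:k_i]$ linear conditions and has the wrong dimension; the former is the divisor whose $k_i$-irreducibility is witnessed by the irreducibility of the norm form, whose geometric components are the hyperplanes $\{x^{(\tau)}=0\}$ in a single $\Gal(L_{\mathfrak P}/k_i)$-orbit, and whose associated algebraic closure of $k_i$ is $L_{\mathfrak P}$. Your justification (``the norm form is $k_i$-irreducible'') and your geometric component count already refer to this correct object, so this is a labeling error rather than a gap in the argument. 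With that corrected, your verification of (i) is complete and the application of Theorem~\ref{main-schinzel} goes through.
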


\begin{proof} It follows from the same proof of Corollary \ref{app-neq} by replacing Theorem \ref{main-1} with Theorem \ref{main-schinzel}. \end{proof}

\section{Fibration over $\Bbb P^1$ with an action of torus}

Harpaz and Wittenberg proposed a conjectures in \cite[Conjecture 9.1 and 9.2]{HW} related with 

$\bullet$ a set of distinct irreducible polynomials $\{p_1(t), \cdots, p_n(t)\}$ over $k$ with $k_i=k[t]/(p_i(t))$ and $a_i\in k_i$ representing the class of $t$ for $1\leq i\leq n$;

$\bullet$ a set of finite extensions $\{L_1/k_1, \cdots, L_n/k_n\}$ and $b_i\in k_i^\times$ for $1\leq i\leq n$.  

Consider the morphism
$$  \Phi: \  (\Bbb A_k^2 \setminus \{(0,0)\})\times_k \prod_{i=1}^n(\Res_{L_i/k}(\Bbb A_{L_i}^1)\setminus F_i) \longrightarrow \prod_{i=1}^n \Res_{k_i/k}(\Bbb A_{k_i}^1) $$ given by  $$(\lambda, \mu, x_1, \cdots, x_n)\mapsto (b_i(\lambda-a_i\mu)-N_{L_i/k_i}(x_i)))_{1\leq i\leq n}$$
where $F_i$ is the singular locus of the variety of $\Res_{L_i/k}(\Bbb A_{L_i}^1)\setminus \Res_{L_i/k}(\Bbb G_{m,L_i})$. They define
\begin{equation} \label{w-equ} W=\Phi^{-1}(0,\cdots, 0) \end{equation}
in \cite[\S 9.2.2.]{HW} and show that \cite[Conjecture 9.1]{HW} holds if $W$ satisfies strong approximation off any finite prime (see \cite[Corollary 9.10]{HW}).

In this section, we apply Proposition \ref{assum} to study the fibration over $\Bbb P^1$ with an action of torus and use \cite[Conjecture 9.1 and 9.2]{HW} to establish strong approximation of $W$.

The following result is due to Sansuc \cite[Theorem 4.1]{S1}, which provides the explicit version of Waldschmidt's result in \cite{Wa} and \cite{Wa1}.

\begin{lem}\label{sansuc1}
Let  $\frak m$ be an integral ideal of a number field $k$. Suppose $\{ v_1, \cdots , v_s\}$ is a set of finite primes of $k$ above a set of primes $\{p_1, \cdots, p_s\}$ of $\Bbb Q$ such that  each $p_i$ splits completely in Galois closure of $k/\Bbb Q$ with $v_i\nmid \frak m$ for $1\leq i\leq s$.
If $s>[k:\Bbb Q]^2-[k:\Bbb Q]+1$, then the set
$$ \{ x\in \frak o_{k,S_0}^\times:  \ \  x\equiv 1 \mod \frak m,  \ \  x \ \text{totally positive at real primes} \} $$
 is dense in the connected component of 1 in $(k\otimes_\Bbb Q \Bbb R)^\times$ with $S_0=\{ v_1, \cdots , v_s\}\cup \infty_k$. 
\end{lem}

One can apply Lemma \ref{sansuc1}  to established strong approximation with Brauer-Manin obstruction for tori with the property of approximation at archimedean primes.

\begin{thm}  \label{apa} Let $T$ be a torus over a number field $k$. If $S$ is a finite subset of $\Omega_k$, there is a finite subset $S_0$ of $\Omega_k$ with $S_0\cap S=\emptyset$ and $|S_0|$ independent of $S$ such that $T$ satisfies strong approximation with respect to $\Br_1(T)$ off $S_0$. 
\end{thm}

\begin{proof} We first prove the case that $T$ is a quasi-trivial torus. Without loss of generality, we assume that $T=\Res_{K/k}(\Bbb G_m)$ where $K/k$ is a finite extension. Choose a set of primes $\{p_1, \cdots, p_s\}$ of $\Bbb Q$ with $s>[K:\Bbb Q]^2-[K:\Bbb Q]+1$ such that each $p_i$ splits completely in Galois closure of $K/\Bbb Q$ and any prime in $S$ is not above $p_i$ for $1\leq i\leq s$. Let $S_0$ be a set of all primes of $k$ above $p_i$ for $1\leq i\leq s$. 

For any open subset $\prod_{v\in S_0} T(k_v) \times \prod_{v\not\in S_0} U_v$ in $T( \mathbf A_k)$ with
$$ (\prod_{v\in S_0} T(k_v) \times \prod_{v\not\in S_0} U_v) \cap T( \mathbf A_k)^{Br_1(T)} \neq \emptyset, $$
 there is $x\in T(k)=K^\times$ such that 
$$ (x\cdot T^+(k_\infty)) \cap (\prod_{v\in S_0} T(k_v) \times \prod_{v\not\in S_0} U_v)\neq \emptyset $$ by \cite[Theorem 3.19]{D11}, where $$T^+(k_\infty)=\prod_{v\in \infty_k}T^+(k_v)$$ is the connected component of 1 in $T(k_{\infty_k})=(K\otimes_k k_\infty)^\times $.

Let $S_1$ be a finite subset of $\Omega_k$ with $\infty_k\subseteq S_1$ and $S_1\cap S_0=\emptyset$ such that $$U_v=(\frak o_{K} \times_{\frak o_k} \frak o_{k_v})^\times=\prod_{w\mid v} \frak o_{K_w}^\times $$ for any $v \not\in S_1\cup S_0$.  
Let $\frak m$ be an integral ideal of $\frak o_{K}$ consisting of the primes above $S_1\setminus \infty_k$ such that $$x(1+\frak m)\subseteq U_v$$ for all $v\in S_1\setminus \infty_k$. Let $\tilde{S_0}$ be the set of primes of $K$ above $S_0$ and $\infty_k$. There is $y\in \frak o_{K,\tilde{S_0}}^\times$ with $y\equiv 1 \mod \frak m$ such that $$y\in ((x^{-1}\cdot \prod_{v\in \infty_k} U_v) \cap T^+(k_\infty)) \subseteq T^+(k_\infty)$$ by applying Lemma \ref{sansuc1}. This implies that 
$$xy \in T(k) \cap  (\prod_{v\in S_0} k_v^\times \times \prod_{v\not\in S_0} U_v)$$ as desired. 

For a general torus $T$, one consider a flasque resolution of $T$ in sense of \cite[Proposition-Definition 3.1]{CT08}
$$ 1\rightarrow T_1\rightarrow T_0 \rightarrow T\rightarrow 1 $$ 
where $T_0$ is a quasi-trivial torus over $k$ and $T_1$ is a flasque torus over $k$.  Since $T_0$ satisfies strong approximation with respect to $\Br_1(T_0)$ off $S_0$ as above, one concludes that $T$ satisfies strong approximation with respect to $\Br_1(T)$ off $S_0$ by the decent relation in \cite[Theorem 5.1]{CLX}. 
\end{proof}

\begin{cor} Let $G$ be a connected linear algebraic group over a number field $k$. If $S$ is a finite subset of $\Omega_k$, there is a finite subset $S_0$ of $\Omega_k$ with $S_0\cap S=\emptyset$ and $|S_0|$ independent of $S$ such that $G$ satisfies strong approximation with respect to $\Br_1(G)$ off $S_0$.
\end{cor}

\begin{proof} Since $$1\rightarrow  G^u \rightarrow G\rightarrow  G^{red} \rightarrow 1 $$ where $G^u$ is the unipotent radical of $G$, one can assume $G=G^{red}$ by \cite[Lemma 2.1]{CDX} and \cite[Proposition 4.1]{CLX}. By \cite[Proposition-Definition 3.1]{CT08} and \cite[Theorem 5.1]{CLX}, one can further assume that $G$ is reductive and quasi-trivial. Then one has the following short exact sequence of algebraic groups
$$ 1\to [G, G] \to G \to T \to 1 $$ where $[G, G]$ is semi-simple and simply connected and $T$ is quasi-trivial. Enlarge $S_0$ such that each simple factor of $[G,G]$ is isotropic over $S_0$  with $S_0\cap S=\emptyset$. Since $[G, G]$ satisfies strong approximation off $S_0$ by \cite[Theorem 7.12]{PR}, one obtains the desired result by Theorem \ref{apa}, \cite[Proposition 4.1]{CLX}, \cite[Lemma 2.1]{CDX} and \cite[Theorem 5.1.1(e)]{sko}.
\end{proof}

The main result of this section is the following result.

\begin{thm} \label{t} 
Let $X$ be a smooth variety with an action of  a torus $T$ over a number field $k$. Suppose that $X\xrightarrow{f} \Bbb P^1$ is a morphism over $k$ such that all fibres of $f$ contain an irreducible component of multiplicity 1. Assume that there is an open dense subset $U\subset \Bbb P^1$ over $k$ such that $f^{-1}(U)\xrightarrow{f} U$ is a torsor under $T$ and $\Bbb P^1\setminus U=\{P_1, \cdots, P_n\}$ contains a rational point. 

(1) If Conjecture 9.1 in \cite{HW} is true for $\{P_1, \cdots, P_n\}$, then for any finite subset $S$ of $\Omega_k$ there is a finite subset $S_0$ of $\Omega_k$ with $S\cap S_0=\emptyset$ and $|S_0|$ independent of $S$ such that $X$ satisfies strong approximation with respect to $\Br_1(X)$ off $S_0$.

(2) If Conjecture 9.1 in \cite{HW} is true for $\{P_1, \cdots, P_n\}$, then $T(k_\infty)^+ \cdot f^{-1}(U)(k)$ is dense in $X(\RA_k)^{\Br_1(X)}$ where $T(k_\infty)^+$ is the connected component of identity of Lie group $T(k_\infty)$.

(3) If Conjecture 9.2 in \cite{HW} is true for $\{P_1, \cdots, P_n\}$, then $f^{-1}(U)(k)$ is dense in $X(\RA_k)_\bullet^{\Br_1(X)}$. 
\end{thm}

\begin{proof}  By Theorem \ref{apa}, there exists a finite subset $S_0$ of $\Omega_k$ with $S\cap S_0=\emptyset$ such that $T(k_\infty)^+ $ can be approximated by the elements in $T(k)\cdot \prod_{v\in S_0} T(k_v)$. 
The statement (2) gives 
  $$X(\RA_k)^{\Br_1(X)}=\overline{T(k_\infty)^+ \cdot f^{-1}(U)(k)} \subset \overline{(\prod_{v\in S_0}T(k_v))\cdot X(k)}\subset X(\RA_k) .$$
This implies that the statement (1) holds. We only need to show (2) and (3).

Let $E$ be a connected component of $X(k_\infty)$ and $W$ be an open compact subset of $X(\mathbf A_k^f)$ such that 
$$ (E\times W) \cap X(\RA_k)^{\Br_1(X)}\neq \emptyset . $$
Since $\Bbb P^1\setminus U=\{P_1, \cdots, P_n\}$ contains a rational point, one concludes that $U$ is quasi-trivial. By Proposition \ref{assum}, there is a finite subgroup $B\subset \Br_1(f^{-1}(U))$ such that 
$$ (E\times W)\cap X_c(\mathbf A_k)^B \neq \emptyset  \ \ \ \Longleftrightarrow \ \ \ (E\times W) \cap X_c(\mathbf A_k)^{\Br_a(X_c)} \neq \emptyset $$ for all $c\in U(k)$.

For (2),  there is $u\in U(k)$ such that 
$$X_u(\RA_k)^{B}\cap (E\times W) \neq \emptyset $$  by \cite[Theorem 9.17]{HW}. Therefore $$(E\times W) \cap X_u(\mathbf A_k)^{\Br_a(X_u)} \neq \emptyset .$$
This implies that $X_u$ is a trivial torsor under $T$ over $k$ by \cite[Theorem 5.2.1]{sko}.  Moreover, there are $g_\infty\in T(k_\infty)^+ $ and $x\in X_u(k)\subset f^{-1}(U)(k)$ such that $g_\infty \cdot x\in W$ by \cite[Theorem]{Ha08}.

For (3), one also can apply \cite[Theorem 9.17]{HW}.  The properness of $f$ is only used to check that a connected component of $f^{-1}(U)(\Bbb R)$ maps onto a connected component of $U(\Bbb R)$ in \cite[pp. 281, line 12]{HW}. This is true in our situation by Hilbert 90. Then there is $u\in U(k)$ such that  
$$X_u(\RA_k)_\bullet^B\cap (E\times W) \neq \emptyset .$$
By the same argument as above (2), one concludes 
$$[(f^{-1}(U)(k)\cap (E\times W)]\supset [X_u(k) \cap (E\times W) ]\neq \emptyset$$ as desired. 
\end{proof}

Consider the surjective homomorphism of tori
$$\psi: \BG_m\times \prod_{i=1}^n \Res_{L_i/k}\BG_m\rightarrow \prod_{i=1}^n \Res_{k_i/k}\BG_m; \ \ \ (\alpha, \alpha_1,\cdots, \alpha_n )\mapsto ( \alpha^{-1} \cdot N_{L_i/k_i}\alpha_i)_{1\leq i\leq n} $$
where $N_{L_i/k_i}$ is the norm map for $1\leq i\leq n$. Then $\ker(\psi)$ acts on $W$ by 
$$ (\alpha, \alpha_1, \cdots, \alpha_n) \circ (\lambda, \mu, x_1, \cdots, x_n) = (\alpha\cdot \lambda, \alpha\cdot \mu, \alpha_1 \cdot x_1, \cdots, \alpha_n \cdot x_n) . $$

Let $f: W\rightarrow \Bbb P^1$ be the fibration given by
$ (\lambda, \mu, x_1, \cdots , x_n ) \mapsto  [\lambda, \mu] $ and $$U=\Bbb P^1\setminus \{(p_1(t)), \cdots, (p_n(t)) \}  $$ where $p_i(t)$ with $1\leq i\leq n$ are the irreducible polynomials with $\lambda = t\mu$. Then 
$f^{-1}(U) \rightarrow U$ is a torsor under $\ker(\psi)$.

\begin{lem} \label{w} Let $W$ be defined as (\ref{w-equ}). Then 
$$ \bar k[W]^\times = \bar k^\times, \ \ \ \Pic(W_{\bar k})=0 \ \ \ \text{and}\ \ \ \Br_1(W)=\Br(k) $$
where  $\bar k$ is an algebraic closure of $k$.
\end{lem}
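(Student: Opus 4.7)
My plan is to describe $W_{\bar k}$ as a partial compactification of a torus torsor over an open subvariety of $\mathbb{A}^2$, then read off the three invariants directly.

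First, set up the torsor: let $f\colon W \to \mathbb{A}^2_k \setminus \{(0,0)\}$ be the projection onto $(\lambda,\mu)$ and let $U \subset \mathbb{A}^2_k \setminus\{(0,0)\}$ be the complement of the divisor cut out by $\prod_{i=1}^n N_{k_i/k}(\lambda - a_i\mu)$. On $U$ every $b_i(\lambda - a_i\mu)$ is a unit in $k_i$, and consequently all coordinates of $x_i$ are automatically nonzero, so $W^0 := f^{-1}(U) \to U$ is a torsor under the $k$-torus $T := \prod_i \Res_{k_i/k}\Res^1_{L_i/k_i}(\mathbb{G}_m)$. Over $\bar k$, $T_{\bar k}$ is a split torus of rank $\sum_\alpha(d_\alpha-1)$, where $\alpha$ ranges over the roots of $\prod_i p_i$ in $\bar k$ and $d_\alpha = [L_{i(\alpha)}:k_{i(\alpha)}]$.

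Next, compute the invariants of $W^0_{\bar k}$. Since $\mathbb{A}^2_{\bar k}\setminus\{0\}$ has only constant units (by Hartogs) and trivial Picard (codimension-two excision from $\mathbb{A}^2$), the standard excision sequence applied to the lines $L_\alpha = \{\lambda - \alpha\mu = 0\}$ gives
$$\bar k[U_{\bar k}]^\times/\bar k^\times \cong \bigoplus_\alpha \mathbb{Z}\cdot[\lambda - \alpha\mu], \qquad \Pic(U_{\bar k}) = 0.$$
Because $\Pic(U_{\bar k})=0$ and $T_{\bar k}$ is split, the $T_{\bar k}$-torsor $W^0_{\bar k}\to U_{\bar k}$ is trivial; thus $W^0_{\bar k}\cong U_{\bar k}\times_{\bar k}T_{\bar k}$, so $\Pic(W^0_{\bar k})=0$, and unraveling the restriction of scalars, $\bar k[W^0_{\bar k}]^\times/\bar k^\times$ is free abelian on the coordinate functions $x_{\alpha,\sigma}$ (indexed by the $d_\alpha$ embeddings $\sigma$ of $L_{i(\alpha)}$ extending the embedding $\tau$ that produces the root $\alpha$), the norm relations $\prod_\sigma x_{\alpha,\sigma} = c_\alpha(\lambda - \alpha\mu)$ just expressing the $[\lambda-\alpha\mu]$'s in terms of the $[x_{\alpha,\sigma}]$.

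Third, handle the boundary. Over each $L_\alpha\setminus\{0\}$ the equation forces $\prod_\sigma x_{\alpha,\sigma}=0$, and the $F_i$-removal condition ensures that exactly one coordinate vanishes, so over $\bar k$ the complement $W_{\bar k}\setminus W^0_{\bar k}$ decomposes into $d_\alpha$ smooth irreducible prime divisors $E_{\alpha,\sigma}$ per $\alpha$. The excision sequence
$$0 \to \bar k[W_{\bar k}]^\times/\bar k^\times \to \bar k[W^0_{\bar k}]^\times/\bar k^\times \xrightarrow{\mathrm{div}} \bigoplus_{\alpha,\sigma}\mathbb{Z}\cdot[E_{\alpha,\sigma}] \to \Pic(W_{\bar k}) \to \Pic(W^0_{\bar k}) \to 0$$
has divisor map $[x_{\alpha,\sigma}]\mapsto[E_{\alpha,\sigma}]$, since $x_{\alpha,\sigma}$ vanishes with multiplicity one along $E_{\alpha,\sigma}$ and nowhere else. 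This is an isomorphism of free abelian groups on matched bases, which gives both $\bar k[W_{\bar k}]^\times = \bar k^\times$ and $\Pic(W_{\bar k})=0$. The final claim $\Br_1(W) = \Br(k)$ then drops out of the low-degree Hochschild--Serre exact sequence, since $\Pic(W_{\bar k})^{\Gal(\bar k/k)}$ and $H^1(k,\Pic(W_{\bar k}))$ both vanish and $H^1(k,\bar k^\times)=0$ by Hilbert 90.

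The most delicate step is the boundary analysis in paragraph three: one must unravel the Weil restriction base change carefully enough to confirm that each $E_{\alpha,\sigma}$ is prime and reduced and that $x_{\alpha,\sigma}$ has divisor exactly $E_{\alpha,\sigma}$ with multiplicity one. Everything else is a routine assembly of standard torsor and excision tools.
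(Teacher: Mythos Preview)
Your proposal is correct and follows essentially the same route as the paper: both arguments use the torsor structure of $f^{-1}(U)$ over an open set to compute units and Picard there, then pass to $W_{\bar k}$ via the divisor exact sequence for the boundary components, and finish with Hochschild--Serre. The only cosmetic differences are that the paper fibres over $\mathbb{P}^1$ with torus $\ker(\psi)$ (absorbing the extra $\mathbb{G}_m$ from $(\lambda,\mu)$-scaling) rather than over $\mathbb{A}^2\setminus\{0\}$, and proves $\bar k[W]^\times=\bar k^\times$ by a direct specialization argument on the larger variety $W_1$ (before removing the $F_i$) instead of reading it off the excision sequence as you do.
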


\begin{proof} Consider the variety $W_1$ over $\bar k$ defined by the following equations 
\begin{equation} \label{nequ}  \prod_{j=1}^{d_i} z_{i,\sigma_i}^{(j)}= \sigma_i(b_i)(\lambda-\sigma_i(a_i)\mu)  \end{equation}
where $\sigma_i$ runs over all embeddings from $k_i/k$ to $\bar k/k$ and $d_i=[L_i:k_i]$ for $1\leq i\leq n$.  For any fixed $\sigma_i$ with $1\leq i\leq n$, the variety defined by a single equation of (\ref{nequ}) is isomorphic to $\Bbb A_{\bar k}^{d_i+1}$. This implies  
 \begin{equation}  \label{invertible} (\bar k[z_{i, \sigma_i}^{(1)}, \cdots, z_{i, \sigma_i}^{(d_i)}, \lambda, \mu]/(\prod_{j=1}^{d_i} z_{i, \sigma_i}^{(j)}-\sigma_i(b_i)(\lambda-\sigma_i(a_i)\mu)))^\times =\bar k^\times . \end{equation}
 We claim that $\bar k[W_1]^\times =\bar k^\times$.  Indeed, suppose $u\in (\bar k[W]^\times \setminus \bar k^\times)$, there is $\sigma_{i_0}$ with $1\leq i_0\leq n$ such that the specialization of $u$ with $z_{i, \tau}^{(j)}\in \bar k$ for all  $\tau\neq \sigma_{i_0}$ and $1\leq j\leq d_i$ is not a constant. This contradicts to (\ref{invertible}) and the claim follows. Since $W_{\bar k}$ is obtained by removing a closed subset of $W_1$ of codimension bigger than 1, one concludes that $\bar k[W]^\times =\bar k [W_1]^\times = \bar k^\times$. 

Since $\Pic(U_{\bar k})=\Pic(\ker(\psi)_{\bar k})=0$, one obtains that $\Pic (f^{-1}(U)_{\bar k})=0$ and 
$$ 1\rightarrow \bar k[U]^\times \rightarrow \bar k[f^{-1}(U)]^\times \rightarrow \bar k[\ker(\psi)]^\times \rightarrow 1 $$
by \cite[Prop.6.10]{S}. Therefore 
$$ \Pic(W_{\bar k}) =\coker(\bar k[f^{-1}(U)]^\times \xrightarrow{div} \Div_{W_{\bar k}\setminus f^{-1}(U)_{\bar k}} W_{\bar k} ) $$ 
and $\{ z_{i,\sigma_i}^{(j)}\}$ is a basis of the free abelian group $\bar k[f^{-1}(U)]^\times/\bar k^\times$ where $\sigma_i$ runs over all embeddings from $k_i/k$ to $\bar k/k$ and $1\leq j\leq d_i$ for $1\leq i\leq n$.  Since $W\xrightarrow{f} \Bbb P^1$ is faithful flat, one gets that $\Div_{W_{\bar k}\setminus f^{-1}(U)_{\bar k}} W_{\bar k} $ is a free abelian group with a basis $\{f^{-1}(\lambda-\sigma_i(a_i)\mu)\}$ where  $\sigma_i$ runs over all embeddings from $k_i/k$ to $\bar k/k$ with $1\leq i\leq n$.  By the equation (\ref{nequ}), one concludes that $\Pic(W_{\bar k})=0$. By the Hochschild-Serre spectral sequence (see \cite[Lemma 6.3 (i)]{S}), one has $\Br_1(W)=\Br(k)$. 
\end{proof}

\begin{cor}  Let $W$ be defined by (\ref{w-equ}) and one of polynomials $\{ p_1(t),\cdots, p_n(t) \}$ be of degree one. 

(1)  If \cite[Conjecture 9.1]{HW} holds for $\{ p_1(t),\cdots, p_n(t) \}$, then for any finite subset $S$ of $\Omega_k$, there is a finite subset $S_0$ of $\Omega_k$ with $S\cap S_0=\emptyset$ and $|S_0|$ independent of $S$ such that $W$ satisfies strong approximation off $S_0$.

(2) If \cite[Conjecture 9.2]{HW} holds for $\{ p_1(t),\cdots, p_n(t) \}$, then  $W(k)$ is dense in $W({\bf A}_k)_{\bullet}$.
\end{cor}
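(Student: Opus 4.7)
The plan is to apply Theorem~\ref{t} directly to $X=W$, equipping $W$ with a natural fibration over $\BP^1_k$ and an action of a torus whose structure mirrors the defining equations of $W$; Lemma~\ref{w} will then convert the conclusions of Theorem~\ref{t} (which live in the category of strong approximation with respect to $\Br_1$) into the unconditional statements in the corollary, since $\Br_1(W)=\Br(k)$ makes the Brauer--Manin condition vacuous.

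First I would introduce the morphism $f\colon W\to\BP^1_k$ defined by $(\lambda,\mu,x_1,\ldots,x_n)\mapsto[\lambda:\mu]$, which is well-defined because $(\lambda,\mu)\neq(0,0)$ on $W$. Together with it, define the torus
\[
T \;=\; \Bigl\{(\alpha_1,\ldots,\alpha_n)\in\prod_{i=1}^n\Res_{L_i/k}(\BG_m)\;:\; N_{L_1/k_1}(\alpha_1)=\cdots=N_{L_n/k_n}(\alpha_n)\Bigr\},
\]
acting on $W$ by $(\alpha_1,\ldots,\alpha_n)\cdot(\lambda,\mu,x_1,\ldots,x_n)=(c\lambda,c\mu,\alpha_1 x_1,\ldots,\alpha_n x_n)$, where $c$ denotes the common value of $N_{L_i/k_i}(\alpha_i)$. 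This preserves each defining equation $b_i(\lambda-a_i\mu)=N_{L_i/k_i}(x_i)$, because both sides scale by $c$. Setting $U=\BP^1_k\setminus\{P_1,\ldots,P_n\}$, the next step is to check that $f^{-1}(U)\to U$ is a $T$-torsor: for any two points of a common fibre the scalars $\alpha_i=x'_i/x_i$ automatically satisfy $N_{L_i/k_i}(\alpha_i)=\lambda'/\lambda=\mu'/\mu$, so $(\alpha_i)\in T$ and is the unique element of $T$ carrying one point to the other. Over each closed point $P_i$ the fibre specialises to $x_i=0$ together with the remaining norm equations $N_{L_j/k_j}(x_j)=b_j(\lambda-a_j\mu)|_{P_i}$ for $j\neq i$; since these describe geometrically integral subvarieties of $\Res$-type, the fibre contains an irreducible component of multiplicity one, as required by Theorem~\ref{t}.

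Once these structural facts are in hand, part (1) of the corollary follows from Theorem~\ref{t}(1) under \cite[Conjecture~9.1]{HW}: for any finite $S\subset\Omega_k$ one obtains a finite $S_0\subset\Omega_k$ disjoint from $S$ such that $W$ satisfies strong approximation with respect to $\Br_1(W)$ off $S_0$, and Lemma~\ref{w} then identifies this with strong approximation off $S_0$ in the usual sense. Part (2) follows analogously from Theorem~\ref{t}(3) under \cite[Conjecture~9.2]{HW}: the inclusion $f^{-1}(U)(k)\subset W(k)$ is dense in $W(\RA_k)_\bullet^{\Br_1(W)}=W(\RA_k)_\bullet$, again invoking Lemma~\ref{w}. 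I expect the only nontrivial step to be the careful verification of the torsor property along $U$ and of the multiplicity-one condition at the exceptional fibres over the $P_i$; once that is done the remainder is a direct application of the already-proved Theorem~\ref{t} and Lemma~\ref{w}.
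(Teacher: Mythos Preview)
Your approach is the same as the paper's: the fibration $f\colon W\to\BP^1$ by $[\lambda:\mu]$, the open set $U=\BP^1\setminus\{P_1,\dots,P_n\}$, a torus action making $f^{-1}(U)\to U$ a torsor, and then Theorem~\ref{t} combined with Lemma~\ref{w}. The paper writes the torus as $\ker(\psi)$ for the homomorphism
\[
\psi\colon \BG_m\times\prod_{i=1}^n\Res_{L_i/k}\BG_m\longrightarrow\prod_{i=1}^n\Res_{k_i/k}\BG_m,\qquad (\alpha,\alpha_1,\dots,\alpha_n)\mapsto(\alpha^{-1}N_{L_i/k_i}(\alpha_i))_{1\le i\le n},
\]
acting by $(\alpha,\alpha_i)\cdot(\lambda,\mu,x_i)=(\alpha\lambda,\alpha\mu,\alpha_ix_i)$.

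There is one genuine imprecision in your write-up that the paper's formulation avoids. Your torus $T$ is defined by the condition $N_{L_1/k_1}(\alpha_1)=\cdots=N_{L_n/k_n}(\alpha_n)$, but these norms take values in the \emph{different} tori $\Res_{k_i/k}\BG_m$, so the equality has no meaning as stated when the residue fields $k_i$ are not all equal to $k$. What you actually need (and what your torsor verification implicitly uses) is the stronger condition that there exist $c\in\BG_m$ with $N_{L_i/k_i}(\alpha_i)=c$ under the diagonal embedding $\BG_m\hookrightarrow\Res_{k_i/k}\BG_m$, for every $i$; without this the action on $(\lambda,\mu)$ is not even defined over $k$. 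The paper's extra $\BG_m$-factor carries precisely this scalar $c$, and $\ker(\psi)$ projects isomorphically onto the corrected version of your $T$. Once you make this fix, your argument and the paper's are identical.
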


\begin{proof}  The results follow from Theorem \ref{t} and Lemma \ref{w}.
\end{proof}

\noindent\textbf{Acknowledgements.} This work has benefited a lot from the program "Reinventing rational points" in IHP. 
We would like to thank J.-L. Colliot-Th\'el\`ene for his useful comments and Y. Harpaz and O. Wittenberg for pointing out an error in Theorem \ref{t} on the early version of this paper.  The first named author is supported by Alexander von Humboldt foundation, the second and the third named authors are supported by NSFC grant no.11631009.

\bibliographystyle{alpha}
\end{document}